\definecolor{darkgreen}{rgb}{0,0.51,0.11}
\numberwithin{equation}{section}
\def\ZZ{{\mathbb{Z}}}
\def\x{\tilde{x}}
\def\y{\tilde{y}}
\def\p{\bar{p}}
\def\fmx{\tilde{f}^{m}(\tilde{x})}
\def\fnx{\tilde{f}^{n}(\tilde{x})}
\def\fmy{\tilde{f}^{m}(\tilde{y})}
\def\fny{\tilde{f}^{m}(\tilde{y})}
\def\fnx*{\tilde{f}^{-n}(\tilde{x})}
\def\fmx*{\tilde{f}^{-m}(\tilde{x})}
\def\fmy*{\tilde{f}^{-m}(\tilde{y})}
\def\fny*{\tilde{f}^{-m}(\tilde{y})}
\def \ZZ {{\mathbb Z}}
\newtheorem{theorem}{Theorem}[section]
\newtheorem{corollary}[theorem]{Corollary}
\newtheorem{lemma}[theorem]{Lemma}
\newtheorem{proposition}[theorem]{Proposition}
\newtheorem{definition}[theorem]{Definition}
\newtheorem{remark}[theorem]{Remark}
\begin{document}

\thanks{ }

\author{P. Mehdipour}
\address{ICMC-USP S\~{a}o Carlos, Caixa Postal 668, 13560-970 S\~{a}o
  Carlos-SP, Brazil.} \email{Pouya@ufv.br}

\author{A. Tahzibi}
\address{Departamento de Matem\'atica,
  ICMC-USP S\~{a}o Carlos, Caixa Postal 668, 13560-970 S\~{a}o
  Carlos-SP, Brazil.}
\email{tahzibi@icmc.sc.usp.br}\urladdr{http://www.icmc.sc.usp.br/$\sim$tahzibi}

\thanks{}

\keywords{}

\subjclass{Primary: 37D25. Secondary: 37D30, 37D35.}

\renewcommand{\subjclassname}{\textup{2000} Mathematics Subject Classification}


\setcounter{tocdepth}{2}

\title[Surface Endomorphisms]{ SRB measures and Homoclinic relation for Endomorphisms}

\thanks{P.M was supported by Capes. A.T was partially supported by CNPq and  Fapesp. }
\begin{abstract}
In this paper we give an upper bound for the number of SRB measures of saddle type of local diffeomorphisms of boundaryless manifolds in terms of maximal cardinality of set of periodic points without any homoclinic relation. 
\end{abstract}

\maketitle
\tableofcontents
\section{\textbf{Introduction}}

The contrast between topological and measure theoretical properties is an interesting subject which frequently appears in the study of dynamics. 

In a beautiful simple construction, I. Kan \cite{5} gave an example of a local diffeomorphism $f$ defined on the cylinder $\mathbb{S}^1 \times [0,1]$ such that $f$ is topologically transitive and moreover, $f$ admits two SRB measures with intermingled basins. Besides the richness of intermingled basins property, the non-uniqueness of SRB measures joint with topological transitivity is amazing.

In \cite{13}, the authors proved that the above phenomenon can not exist for surface diffeomorphism. More precisely they proved 

\begin{theorem} \cite{13}
Let $f: M \rightarrow M$ be a $C^{1+\alpha}, \alpha > 0$ diffeomorphism over a compact surface $M.$ If $f$ is topologically transitive then there exists at most one (hyperbolic) SRB measure.
\end{theorem}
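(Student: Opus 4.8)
I would argue by contradiction, using the theory of ergodic homoclinic classes (in the sense of Rodriguez Hertz, Rodriguez Hertz and Ures). Suppose $f$ carried two distinct hyperbolic SRB measures. Decomposing into ergodic components, each of which is again a hyperbolic SRB measure, I may assume they are ergodic, say $\mu_1\neq\mu_2$; moreover, since a transitive surface diffeomorphism has no periodic attractors, neither $\mu_i$ is atomic. The plan has four moves: (i) attach to each $\mu_i$ a hyperbolic periodic point $p_i$ whose ergodic homoclinic class carries $\mu_i$ and is essentially the \emph{unique} such class; (ii) upgrade the SRB property to the statement that the stable and unstable ergodic homoclinic classes $\Lambda^s(p_i)$ and $\Lambda^u(p_i)$ have positive Lebesgue measure and are open up to a Lebesgue-null set; (iii) use transitivity to turn ``essentially open and invariant'' into ``contains a dense open set'', forcing all four sets $\Lambda^s(p_1),\Lambda^u(p_1),\Lambda^s(p_2),\Lambda^u(p_2)$ to share a common point; and (iv) conclude from such a point that $p_1$ and $p_2$ are homoclinically related, so that $\Lambda(p_1)=\Lambda(p_2)$, and then deduce $\mu_1=\mu_2$ from the uniqueness in (i), a contradiction.

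For step (i) I would apply Katok's closing lemma to the non-atomic hyperbolic measure $\mu_i$: it yields a hyperbolic periodic point $p_i$ whose local invariant manifolds are $C^1$-close to the Pesin manifolds at a density point of a Pesin block of $\mu_i$, and, transversality of stable and unstable Pesin manifolds being an open condition, a set of positive $\mu_i$-measure of points $x$ then has its Pesin stable (resp.\ unstable) manifold $W^s(x)$ (resp.\ $W^u(x)$) crossing $W^u(\mathcal{O}(p_i))$ (resp.\ $W^s(\mathcal{O}(p_i))$) transversally. Setting $\Lambda^s(p_i)=\{x:\ W^s(x)\ \text{meets}\ W^u(\mathcal{O}(p_i))\ \text{transversally}\}$ and $\Lambda^u(p_i)$ symmetrically, these are $f$-invariant, so ergodicity gives $\mu_i(\Lambda^s(p_i))=\mu_i(\Lambda^u(p_i))=1$ and hence $\mu_i(\Lambda(p_i))=1$ with $\Lambda(p_i)=\Lambda^s(p_i)\cap\Lambda^u(p_i)$; a Hopf argument along the locally product-structured set $\Lambda(p_i)$ then shows $\mu_i$ is the only ergodic hyperbolic measure charging both $\Lambda^s(p_i)$ and $\Lambda^u(p_i)$. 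For step (ii) I would invoke the classical absolute-continuity argument for SRB measures: the forward basin of $\mu_i$ has positive Lebesgue measure and, up to a null set, is the saturation by Pesin stable manifolds of a positive-measure set of $\mu_i$-generic points; as those points lie in $\Lambda^s(p_i)$ and $\Lambda^s(p_i)$ is stable-saturated, $\leb(\Lambda^s(p_i))>0$, and symmetrically, saturating by the unstable manifolds of $\mu_i$-generic points (on which the conditional measures of $\mu_i$ are absolutely continuous) gives $\leb(\Lambda^u(p_i))>0$. That $\Lambda^s(p_i)$ and $\Lambda^u(p_i)$ are open modulo a null set I would obtain by transporting a transverse intersection through a Pesin block, on which Pesin manifolds vary $C^1$-continuously, and then using absolute continuity of the stable holonomy to thicken the resulting ``block box'' into a genuine open set up to measure zero.

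Step (iii) is soft. Let $\Lambda$ be any of the four sets: it is $f$-invariant with $\leb(\Lambda)>0$, and $\leb(V\,\triangle\,\Lambda)=0$ for some open $V$. A $C^1$ diffeomorphism carries null sets to null sets, so $\leb(f^n(V)\,\triangle\,\Lambda)=0$ for every $n\in\ZZ$; therefore $G:=\bigcup_{n\in\ZZ}f^n(V)$ is a nonempty open $f$-invariant set with $\leb(G\,\triangle\,\Lambda)=0$, and $\overline{G}=M$ by transitivity and connectedness of $M$. Thus each of $\Lambda^s(p_1),\Lambda^u(p_1),\Lambda^s(p_2),\Lambda^u(p_2)$ differs by a null set from a dense open set; the intersection of these four dense open sets is again open and dense, and it differs by a null set from $\Lambda(p_1)\cap\Lambda(p_2)$, which is therefore nonempty. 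Picking a point $x$ there, the structure of ergodic homoclinic classes forces $p_1$ and $p_2$ to be homoclinically related: heuristically, $x$'s Pesin stable and unstable manifolds are threaded transversally by the invariant manifolds of both orbits $\mathcal{O}(p_1)$ and $\mathcal{O}(p_2)$, and a relative inclination (Lambda) lemma for Pesin manifolds connects them up, producing transverse intersections $W^u(\mathcal{O}(p_1))\pitchfork W^s(\mathcal{O}(p_2))$ and $W^u(\mathcal{O}(p_2))\pitchfork W^s(\mathcal{O}(p_1))$. By the usual Lambda-lemma argument this forces $\Lambda^s(p_1)=\Lambda^s(p_2)$ and $\Lambda^u(p_1)=\Lambda^u(p_2)$; hence $\mu_2$ too charges $\Lambda^s(p_1)$ and $\Lambda^u(p_1)$, and the uniqueness of step (i) yields $\mu_1=\mu_2$, contradicting $\mu_1\neq\mu_2$. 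Therefore $f$ has at most one hyperbolic SRB measure.

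I expect the real difficulty to be concentrated in step (ii) (and in the Pesin-adapted inclination lemma used in (iv)): reconciling the non-uniformly hyperbolic objects -- Pesin manifolds that exist only Lebesgue-almost everywhere, vary $C^1$-continuously only on Pesin blocks, and may have sizes that degenerate -- with the rigid conclusions (``positive Lebesgue measure'', ``open modulo a null set'') required to launch the soft Baire/transitivity argument of step (iii). This is exactly where absolute continuity of the Pesin stable and unstable laminations is indispensable, and hence where the hypothesis $C^{1+\alpha}$, rather than merely $C^1$, enters the proof. By contrast, the Katok-closing input in step (i) and the Hopf argument giving uniqueness on an ergodic homoclinic class are routine once the $C^{1+\alpha}$ Pesin package is available.
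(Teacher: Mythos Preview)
This theorem is not proved in the present paper: it is quoted from \cite{13} (Rodriguez Hertz--Rodriguez Hertz--Tahzibi--Ures) as motivation, and the paper then develops an \emph{endomorphism} analogue culminating in Theorem~\ref{main}. So there is no ``paper's own proof'' to compare against; your proposal is in effect a reconstruction of the argument in \cite{13}, and as such the overall architecture---Katok closing to produce $p_{\mu}$, ergodic homoclinic classes $\Lambda(p_\mu)$ carrying $\mu$, absolute continuity to pass to Lebesgue, and a transitivity/$\lambda$-lemma step to force a homoclinic relation between $p_{\mu_1}$ and $p_{\mu_2}$---is the right one and matches the machinery the present paper adapts in Sections~\ref{4-0}--6.

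One point in your sketch deserves scrutiny. In step~(ii) the argument for $\leb(\Lambda^s(p_i))>0$ via the SRB basin is standard, but your ``symmetric'' claim that $\leb(\Lambda^u(p_i))>0$ and that $\Lambda^u(p_i)$ is open modulo a null set is not symmetric: SRB gives absolute continuity of conditionals \emph{along} unstable leaves, not that unstable-saturated sets of full $\mu_i$-measure have positive ambient Lebesgue measure. In \cite{13} (and in the present paper's Section~6) the asymmetry is handled differently: one does not need all four sets to be Lebesgue-essentially open. Rather, one uses that (a) the basin of each $\mu_i$ (hence $\Lambda^s(p_i)$) is Lebesgue-essentially open, and (b) the global unstable manifold $W^u(\mathcal{O}(p_j))$ is an immersed curve whose iterates, by transitivity, meet any nonempty open set; a $\lambda$-lemma argument then yields $W^u(\mathcal{O}(p_1))\pitchfork W^s(\mathcal{O}(p_2))\neq\emptyset$ (and vice versa) directly, without ever asserting $\leb(\Lambda^u(p_i))>0$. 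Your step~(iii) Baire argument with four dense open sets is therefore more than is needed, and the part of it concerning $\Lambda^u$ is not justified as written. If you rewrite step~(ii)/(iii) to use only the Lebesgue-openness of the $\Lambda^s(p_i)$ together with transitivity acting on the one-dimensional unstable curves $W^u(\mathcal{O}(p_j))$, the argument goes through and aligns with both \cite{13} and the endomorphism version proved here.
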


In this paper we deal with endomorphisms and by an endomorphism we refer to local diffeomorphism of a closed Riemannian manifold (compact and boundaryless). We remark that the endomorphism setting brings many surprises and distinctions with respect to the diffeomorphism context. Recall that I. Kan's example is made on two dimensional cylinder. His construction was extended by Ilyashenko, Kleptsy, Saltykov \cite{IKS}. See also \cite[11.1.1]{BDV}. 
By the way, it is possible to use Kan's example and construct a transitive \footnote{We would like to thank M. Andersson and J. Yang for pointing to us such construction of Kan example on $\mathbb{T}^2.$} endomorphism on $\mathbb{T}^2$ with two SRB measures of intermingled basins contrasting the above theorem for the case of non-invertible dynamics.

Here we find an upper bound for the number of ergodic SRB measures of {\it saddle type} in terms of the maximial cardinality of set of periodic points without any homoclinic relation. By a SRB measure of saddle type we mean a SRB measure whitout zero Lyapunov exponents and having both positive and negative ones.

For any endomorphism $f: M \rightarrow M$ we denote by $ \tilde{f}: M^f \rightarrow M^f$ the natural extension of $f$ where $\tilde{f}$ is  the shift map. The natural projection $\pi: M^f \rightarrow M$ is a semi conjugacy between $f$ and $\tilde{f}.$ Given any periodic point $p$ for an endomorphism $f$ we denote by $\bar{p}$ the unique point such that $\pi(\bar{p}) = p$ and $\bar{p}$ is periodic for $\tilde{f}.$ 

For any two hyperbolic periodic points $p$ and $q$ we say that $[p, q] \neq \emptyset$ iff $ W^u(\bar{p}) \pitchfork W_{loc}^s(\mathcal{O}(q)) \neq \emptyset.$  If $z \in W^u(\bar{p}) \pitchfork W_{loc}^s(\mathcal{O}(q))$ then $T_z W^u(\bar{p}) \oplus T_z(W_{loc}^s(\mathcal{O}(q))) = T_z(M).$ See Section \ref{stable-unstable} for more precise definitions. 

To give a concrete bound for the number of SRB measures we define {\it skeleton} inside hyperbolic periodic points of a fixed stable index.
\begin{definition}
A $k-$skeleton $( 0 < k < n=\dim(M))$ of $f$ is a subset of hyperbolic periodic points $\{p_i\}_{i \in \mathcal{I}}$ of stable index $k$ such that:
\begin{itemize}
\item For any hyperbolic periodic point $p \in M$ of index $k$, there is $i \in \mathcal{I}$ such that either $[p, p_i] \neq \emptyset$ or $[p_i, p] \neq \emptyset.$
\item For every $i \neq j, [p_i, p_j] = \emptyset.$
\end{itemize}
\end{definition}
Let us denote by $\mathcal{E}_k,$  the maximal cardinality of $k-$skeletons inside $Per_k(f)$ (hyperbolic periodic points of stable index $k$). 

\begin{theorem} \label{main}
Let $f: M \rightarrow M$ be a $C^2-$endomorphism of a closed $n-$dimensional manifold. Then for any $ 0 < k < n$
$$ \sharp \{ \text{Ergodic hyperbolic SRB measures of index k} \} \leq  \mathcal{E}_k.$$ 
\end{theorem}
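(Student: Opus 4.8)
The plan is to mimic the Hertz–Hertz–Tahzibi–Ures–Vásquez argument (reference [13]) for surface diffeomorphisms, upgraded to the endomorphism setting via the natural extension, and to show that two ergodic hyperbolic SRB measures of the same index $k$ whose associated periodic skeleton points are homoclinically related must in fact coincide. Granting that, the bound follows by a counting argument: to each ergodic hyperbolic SRB measure $\mu$ of index $k$ I would attach (at least) one periodic point $p_\mu\in\Per_k(f)$ which is ``typical'' for $\mu$ in the Pesin/Katok sense — a hyperbolic periodic point whose stable and unstable manifolds accumulate on a positive-measure set, obtained from Katok's closing lemma applied to $\tilde f$ on a Pesin block of $\mu$ (lifted to $M^f$). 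Distinct measures then yield skeleton points with no homoclinic relation between them, so their number is at most $\mathcal E_k$.

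\medskip

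\textbf{Step 1 (Pesin theory and unstable manifolds upstairs).} First I would set up Pesin theory for the SRB measure $\mu$ of index $k$. Since $f$ is noninvertible, stable manifolds are not well defined pointwise on $M$, so I would work on the natural extension $\tilde f:M^f\to M^f$ with the lifted measure $\tilde\mu$ (the unique $\tilde f$-invariant measure projecting to $\mu$), which is also SRB/hyperbolic of the same index. For $\tilde\mu$-a.e.\ $\tilde x$ one has a local unstable manifold $W^u(\tilde x)$ (depending on the whole backward orbit, hence genuinely living upstairs) and, projecting down, a local stable manifold $W^s(x)$ for $\mu$-a.e.\ $x$. The SRB property means $\mu$ has absolutely continuous conditional measures on unstable plaques; its basin has positive Lebesgue measure.

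\medskip

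\textbf{Step 2 (Periodic skeleton point for each measure).} Using Katok's shadowing/closing lemma on a Pesin block of positive $\tilde\mu$-measure, I would produce a hyperbolic periodic point $\bar p$ of $\tilde f$ of index $k$ whose orbit shadows generic orbits of $\tilde\mu$; its projection $p$ is a hyperbolic periodic point of $f$ of stable index $k$. The key accessibility facts, also from Katok-type arguments, are that $W^u(\bar p)$ has transverse intersections with local stable manifolds of a positive-$\mu$-measure set of points, and symmetrically $W^s(\mathcal O(p))$ meets $W^u$-plaques of a positive-$\mu$-measure set; moreover $W^u(\bar p)\pitchfork W^s_{loc}(\mathcal O(p))\neq\emptyset$, i.e.\ $[p,p]\neq\emptyset$.

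\medskip

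\textbf{Step 3 (Homoclinically related skeleton points $\Rightarrow$ equal measures).} This is the heart of the matter, and the main obstacle. Suppose $\mu_1,\mu_2$ are ergodic hyperbolic SRB measures of index $k$ with associated periodic points $p_1,p_2$, and suppose $p_1$ and $p_2$ are homoclinically related, say $[p_1,p_2]\neq\emptyset$ and $[p_2,p_1]\neq\emptyset$. I would argue as in [13]: the unstable manifold $W^u(\bar p_1)$, by a $\lambda$-lemma / inclination-lemma argument for $\tilde f$, accumulates (in the $C^1$ sense on plaques) on $W^u(\bar p_2)$ and vice versa, so the unstable ``laminations'' of $\mu_1$ and $\mu_2$ overlap on a common region. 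Because both measures are SRB — absolutely continuous along unstable plaques — and both basins have positive Lebesgue measure, absolute continuity of the stable holonomy (valid in the $C^{1+\alpha}$, here $C^2$, setting) forces the two SRB measures to have the same conditional measures on a common unstable plaque, hence $\mu_1=\mu_2$ by ergodicity. The delicate points are: (i) making the inclination lemma work on $M^f$ where unstable manifolds depend on prehistories, so one must track coherent sequences of plaques; (ii) ensuring the transverse intersections in the definition of $[p,q]$ genuinely give $C^1$-closeness of plaques and not merely topological crossing; and (iii) the absolute continuity argument, which requires that the SRB unstable plaques one obtains in the limit are honest Pesin unstable manifolds and not just measurable families — this is where the $C^2$ hypothesis and uniform estimates on Pesin blocks enter.

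\medskip

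\textbf{Step 4 (Counting).} Finally, form the set $\{p_\mu : \mu \text{ ergodic hyperbolic SRB of index }k\}$. By Step 3, if $\mu\neq\mu'$ then $p_\mu$ and $p_{\mu'}$ are not homoclinically related, so no two of them can be ``identified'' by the skeleton equivalence; one then checks this set can be completed to (or already is) a $k$-skeleton — each hyperbolic periodic point of index $k$ is homoclinically related to some $p_\mu$ by Step 2 applied at the level of the chain-recurrence/homoclinic class structure, or else one simply adds it. Hence the number of such measures is at most the maximal cardinality $\mathcal E_k$ of a $k$-skeleton, which is Theorem \ref{main}.
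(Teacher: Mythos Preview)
Your overall architecture matches the paper's, but there is a genuine gap at the interface of Steps~3 and~4 which prevents the counting from going through.

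In Step~3 you assume \emph{both} $[p_1,p_2]\neq\emptyset$ and $[p_2,p_1]\neq\emptyset$ in order to conclude $\mu_1=\mu_2$. Consequently, what you obtain for distinct measures is only that $p_{\mu}$ and $p_{\mu'}$ are not homoclinically related in the classical two-sided sense: at least one of $[p_\mu,p_{\mu'}]$, $[p_{\mu'},p_\mu]$ is empty. But the second condition in the definition of a $k$-skeleton requires $[p_i,p_j]=\emptyset$ for \emph{every} ordered pair $i\neq j$, i.e.\ both one-sided relations must vanish. Your Step~3 leaves open the possibility that, say, $[p_{\mu_1},p_{\mu_2}]\neq\emptyset$ while $[p_{\mu_2},p_{\mu_1}]=\emptyset$ for distinct $\mu_1,\mu_2$; such a pair violates the skeleton condition, so $\{p_\mu\}$ cannot be completed to a skeleton and the bound by $\mathcal{E}_k$ does not follow.

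The paper closes this gap by proving the stronger one-sided statement: if \emph{either} $[p_\mu,p_\nu]\neq\emptyset$ \emph{or} $[p_\nu,p_\mu]\neq\emptyset$, then $\mu=\nu$. The mechanism that makes a single direction suffice is the ergodic homoclinic class: one first upgrades ``positive measure'' in your Step~2 to $\tilde\mu(\tilde\Lambda(\bar p_\mu))=1$ via an ergodic criterion (Theorem~\ref{4-13} and Corollary~\ref{keypoint}). Then for $\tilde\mu$-a.e.\ $\tilde x$ the iterates of $W^u(\tilde x)$ accumulate $C^1$ on $W^u(\bar p_\mu)$; a single relation $[p_\mu,p_\nu]\neq\emptyset$ and one more application of the $\lambda$-lemma transport these plaques to a neighborhood of $W^u(\bar p_\nu)$, where they meet the stable lamination of a $\nu$-Pesin block. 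Absolute continuity of stable holonomy then gives $B_\mu\cap B_\nu\neq\emptyset$ (not a ``common unstable plaque with equal conditionals'' as you wrote, but intersection of basins), hence $\mu=\nu$. With this one-sided version in hand, the set $\{p_\mu\}$ genuinely satisfies the second skeleton axiom and can be enlarged to a maximal such set, which is then a $k$-skeleton, giving the bound.
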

 
The idea of using hyperbolic periodic points to analyze the number of SRB measures appear in \cite{13} and \cite{VY1}. In the context of partially hyperbolic diffeomorphisms with mostly contracting center, Viana and Yang \cite{VY} exhibited skeleton (defined by them with some similar properties) determining the number of basins of physical measures and concluded continuity results about the number of physical measures. In this paper we are not assuming any partial hyperbolicity assumption and invertibility of dynamics.  

Although the upper bound in the above theorem may be far from the number of SRB measures for a general endomorphism, in some cases we can obtain sharp number of SRB measures. For instance in the case of Kan example using the proof of the above theorem we conclude that there are at most two SRB measures (which is a known fact), See \ref{kan}. Indeed, in the proof of theorem \ref{main} we correspond to each ergodic hyperbolic SRB measure $\mu$ of index $k$ a hyperbolic periodic point $P_{\mu}$ in $Per_k$ whose {\it ergodic homoclinic class} has full measure. Then, the key point is that given any two hyperbolic SRB measures $\mu$ and $\nu$ of index $k,$ if for the corresponding periodic points $P_{\mu}, P_{\nu}$ one of the conditions: $[P_{\mu}, P_{\nu}] \neq \emptyset$ or $[P_{\nu}, P_{\mu}] \neq \emptyset$ is satisfied, then $\mu=\nu.$

We also mention a result of Hirayama-Sumi \cite{HS} where they prove the ergodicity of hyperbolic smooth (SRB) measures under the condition of constancy of the dimension of unstable bundle and intersection property of stable and unstable manifolds of almost every pair of regular points.
We emphasize that all the referred previous known results have been proved in the setting of invertible dynamical systems.

\section{Preliminaries on Endomorphisms} \label{preliminaries}

Let $M$ be a closed Riemannian surface. By a $C^2-$endomorphisms $f:M\rightarrow M$ we mean a local $C^{2}-$diffeomorphism and $\mathcal{M}_{f}(M)$ denotes the set of all $f-$invariant Borel probability measures. Note that $f$ satisfies the following integrability condition. 
$$\log \vert det\,d_{x}f\vert\in \mathcal{L}^{1}(M,\mu).$$  

For such $f$, consider the compact metric space
$$M^{f}:=\{\tilde{x}=(x_{n})\in\prod_{-\infty}^{\infty}M : f(x_{n})=x_{n+1} \quad \text{for all} \quad n\in \mathbb{Z}\},$$
equipped with the distance $\tilde{d},$ between $\tilde{x}=(x_{n})$ and $\tilde{y}=(y_{n})\in M^{f}$ defined by
$$\tilde{d}(\tilde{x},\tilde{y}):=\sum_{n=-\infty}^{\infty}2^{-|n|}d(x_{n}, y_{n}).$$
where $d$ is the distance on $M$ induced by the Riemannian metric. Let $\pi$ be the natural projection from $M^{f}$ to $M$ i.e, $\pi((x_{n}))=x_{0}, \forall \tilde{x}\in M^{f}$ and $\tilde{f}:M^{f}\rightarrow M^{f}$ be the shift homeomorphism. It is clear that  $\pi\circ \tilde{f}=f\circ \pi.$ 
The map $\tilde{f}: M^{f}\rightarrow M^{f}$ is called the \textit{Inverse Limit} of $f$ or the \textit{Natural Extension} of the system $(M,f)$ and $M^{f}$ is the \textit{Inverse Limit Space}. 

Any periodic point $p,$ i.e $f^n(p)=p$ has an special pre-image (under $\pi$) in $M^f,$ $\bar{p} = (\cdots, p, f(p), \cdots, f^{n-1}(p), \cdots)$ which is $\tilde{f}-$periodic. We work with this special pre-image in many instances.

The map $\pi$ induces a continuous map from $\mathcal{M}_{\tilde{f}}(M^{f})$ to $\mathcal{M}_{f}(M),$ usually denoted by $\pi_{*}$ i.e. for any $\tilde{f}-$invariant Borel probability measures $\tilde{\mu}$ on $M^{f}$, $\pi_*$ maps it to an $f-$invariant Borel probability measure $\pi_{*}\tilde{\mu}$ on $M$ defined as 
$$\pi_{*}\tilde{\mu}(\phi)=\tilde{\mu}(\phi\circ \pi),\,\,\,\,\,\,\forall \phi \in C(M).$$
The following proposition I.3.1 of \cite{10} guarantees that $\pi$ is a bijection between $\mathcal{M}_{\tilde{f}}(M^{f})$ and $\mathcal{M}_{f}(M).$ 
\begin{proposition}\label{1}
	Let $f$ be a continuous map on $M$. For any $f-$invariant Borel probability measure $\mu$ on $M$, there exists a unique $\tilde{f}-$invariant Borel probability measure $\tilde{\mu}$ on $M^{f}$ such that $\pi_{*} \tilde{\mu}=\mu$. Moreover, $\mu$ is ergodic if and only if $\tilde{\mu}$ is ergodic.
\end{proposition}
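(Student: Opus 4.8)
\medskip
\noindent\textbf{Proof proposal.} The plan is to prove uniqueness first, then existence, and finally the ergodicity equivalence (this is the classical Rokhlin natural‑extension package). Throughout I use $\pi\circ\tilde f=f\circ\pi$ together with the coordinate projections $\pi_{-n}:=\pi\circ\tilde f^{-n}:M^f\to M$, $\pi_{-n}(\tilde x)=x_{-n}$, which satisfy $\pi_{-n}=f^{\,m-n}\circ\pi_{-m}$ for $0\le n\le m$, and hence $\pi_{-n}^{-1}(A)=\pi_{-m}^{-1}(f^{-(m-n)}A)$. For \emph{uniqueness}: if $\tilde\mu$ is $\tilde f$‑invariant with $\pi_*\tilde\mu=\mu$, then $(\pi_{-n})_*\tilde\mu=\pi_*(\tilde f^{-n})_*\tilde\mu=\mu$ for all $n\ge0$, so $\tilde\mu(\pi_{-n}^{-1}(A))=\mu(A)$ is forced for every Borel $A\subseteq M$ and every $n$. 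The family $\{\pi_{-n}^{-1}(A):A\in\mathcal B(M),\ n\ge0\}$ is closed under finite intersections (by the identity above) and generates $\mathcal B(M^f)$, since every cylinder $\{\tilde x:x_j\in A_j,\ |j|\le n\}$ equals $\pi_{-n}^{-1}\big(\bigcap_{j=-n}^{n}f^{-(j+n)}A_j\big)$ and such cylinders generate the Borel $\sigma$‑algebra of $M^f$; Dynkin's $\pi$--$\lambda$ theorem then pins $\tilde\mu$ down.

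For \emph{existence}, note first that $\mu$ is carried by $\pi(M^f)=\bigcap_{k\ge0}f^k(M)$: since $f^{-1}(f(M))=M$, invariance gives $\mu(f(M))=1$, and inductively $\mu(f^k(M))=1$. Consequently the assignment $\phi\circ\pi\mapsto\int_M\phi\,d\mu$ is well defined on the subspace $\{\phi\circ\pi:\phi\in C(M)\}\subseteq C(M^f)$, has norm at most $1$, and equals $1$ at the constant function; extending it by Hahn--Banach and applying the Riesz representation theorem produces a Borel probability $\lambda$ on $M^f$ with $\pi_*\lambda=\mu$. Let $\tilde\mu$ be any weak‑$*$ accumulation point of $\frac{1}{N}\sum_{k=0}^{N-1}\tilde f^k_*\lambda$: it is $\tilde f$‑invariant by Krylov--Bogolyubov, and since $\pi_*(\tilde f^k_*\lambda)=f^k_*\mu=\mu$ for every $k$ and $\pi_*$ is weak‑$*$ continuous, $\pi_*\tilde\mu=\mu$. (Alternatively, existence and uniqueness come at once from the Kolmogorov extension theorem for the inverse system $M\xleftarrow{f}M\xleftarrow{f}\cdots$, whose compatibility condition is precisely $f_*\mu=\mu$.)

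For the \emph{ergodicity equivalence}, one direction is immediate: if $\tilde\mu$ is ergodic and $f^{-1}(A)=A$, then $\tilde f^{-1}(\pi^{-1}A)=\pi^{-1}(f^{-1}A)=\pi^{-1}A$, so $\mu(A)=\tilde\mu(\pi^{-1}A)\in\{0,1\}$. Conversely, assume $\mu$ is ergodic and take the ergodic decomposition $\tilde\mu=\int\tilde\mu_\omega\,dP(\omega)$ of the $\tilde f$‑invariant measure $\tilde\mu$ (available since $M^f$ is compact metric). Pushing forward, $\mu=\pi_*\tilde\mu=\int\pi_*\tilde\mu_\omega\,dP(\omega)$, and each $\pi_*\tilde\mu_\omega$ is $f$‑ergodic by the direction just proved, so this is an ergodic decomposition of $\mu$; since $\mu$ is itself ergodic, uniqueness of the ergodic decomposition forces $\pi_*\tilde\mu_\omega=\mu$ for $P$‑a.e.\ $\omega$. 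As each such $\tilde\mu_\omega$ is $\tilde f$‑invariant and projects to $\mu$, the uniqueness part gives $\tilde\mu_\omega=\tilde\mu$ for $P$‑a.e.\ $\omega$, i.e.\ $\tilde\mu$ is ergodic.

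The step that actually requires an idea is ``$\mu$ ergodic $\Rightarrow\tilde\mu$ ergodic'': a $\tilde f$‑invariant subset of $M^f$ need not agree modulo $\tilde\mu$ with the $\pi$‑preimage of any subset of $M$, so a direct cylinder argument fails, and the route above instead exploits the easy direction together with uniqueness of the lift and of the ergodic decomposition. Everything else is routine -- uniqueness is the $\pi$--$\lambda$ theorem, and existence is Krylov--Bogolyubov (or Kolmogorov), the only mild point being that a lift exists precisely because $\mu$ is supported on $\pi(M^f)$.
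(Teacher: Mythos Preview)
Your proof is correct. The easy direction of the ergodicity equivalence coincides with the paper's, and your existence/uniqueness arguments are more detailed than the paper's (which simply declares the proposition standard and only writes out the ergodicity part). The genuine divergence is in the hard implication, $\mu$ ergodic $\Rightarrow\tilde\mu$ ergodic. The paper works with the increasing filtration $\tilde{\mathcal B}_n:=\tilde f^{\,n}(\pi^{-1}\mathcal B)$: for any $\tilde f$-invariant $\tilde\phi\in L^1(\tilde\mu)$ it shows each conditional expectation $E(\tilde\phi\mid\tilde{\mathcal B}_n)$ is again $\tilde f$-invariant, identifies $(M^f,\tilde{\mathcal B}_n,\tilde\mu,\tilde f)$ with $(M,\mathcal B,\mu,f)$ to conclude these conditional expectations are constant by ergodicity of $\mu$, and then lets $n\to\infty$ (martingale convergence, since $\tilde{\mathcal B}_n\uparrow\mathcal B(M^f)$) to obtain that $\tilde\phi$ is constant. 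You instead bootstrap from the uniqueness of the lift: take the ergodic decomposition of $\tilde\mu$, push each component down, use the easy direction plus ergodicity of $\mu$ to force every pushforward to equal $\mu$, and then invoke uniqueness to collapse the decomposition. Your route is more structural and recycles the uniqueness you already established; the paper's route stays at the level of $\sigma$-algebras and martingales and avoids the ergodic decomposition theorem. Both are standard; yours is arguably slicker but leans on slightly heavier machinery, while the paper's is more self-contained measure-theoretically.
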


\begin{proof}
	The above proposition is standard and we just recall the proof of correspondence between ergodic measures. 
	Consider the following diagram which permutes $\tilde f$ and $f$.
	\textbf{$$\begin{CD}
		M^f @>\tilde f>> M^f\\
		@VV\pi V @VV\pi V\\
		M @>f>> M 
		\end{CD}$$}
Suppose $\tilde{\mu}$ is ergodic. For each $f$-invariant subset $A\subset M$ i.e, $f^ {-1}(A)=A$, we can easily observe that $\pi^{-1}(A)$ satisfies $\tilde{f}^{-1}(\pi^{-1}( A)) =  \pi^{-1}( A)$ and by ergodicity of $\tilde{\mu}$ then $\tilde{\mu}(\pi^{-1}(A))=\pi_{*}\tilde{\mu}=\mu(A)$ is either zero or one. Now let prove the reciprocal claim. Consider $\mathcal{\tilde{B}}_n := \tilde{f}^n (\pi^{-1}\mathcal{B})$ where $\mathcal{\tilde{B}}$ is the Borel $\sigma-$algebra of $M.$ It is easy to see that $(M, \mathcal{B}, \mu, f)$ is isomorphic to $(\tilde{M}, \mathcal{\tilde{B}}_n, \tilde{\mu}, \tilde{f}).$ Observe that by a general statement for conditional expectations, for any $\tilde{\phi}\in L^{1}(\tilde{\mu})$ we have:
	$$
   E(\tilde{\phi} \circ \tilde{f}| \tilde{f}^{-1}(\mathcal{\tilde{B}}_n)) = E(\tilde{\phi}|\mathcal{\tilde{B}}_n) \circ \tilde{f}.	
	$$
 By invariance property of $\mathcal{\tilde{B}}_n, i.e, \tilde{f}^{-1}(\mathcal{\tilde{B}}_n) = \mathcal{\tilde{B}}_n$ we conclude that 
	$$
	 E(\tilde{\phi} \circ \tilde{f}| \mathcal{\tilde{B}}_n) = E(\tilde{\phi}| \mathcal{\tilde{B}}_n) \circ \tilde{f}.	
	$$
 Now, take any $\tilde{\phi} \in L^1(\tilde{\mu})$ which is $\tilde{f}-$invariant. By the above relation we have that $E(\tilde{\phi} | \mathcal{\tilde{B}}_n)$ is $\tilde{f}-$invariant. The $E(\tilde{\phi} | \mathcal{\tilde{B}}_n)$ can be considered as $\mathcal{B}$ measurable by isomorphism and ergodicity of $\mu$ implies that $E(\tilde{\phi} | \mathcal{\tilde{B}}_n)$ is constant. Finally $\mathcal{\tilde{B}}_n$ converge to the Borel $\sigma-$algebra of $\tilde{M}$ and this implies that $\tilde{\phi} = \lim E(\tilde{\phi} |\mathcal{\tilde{B}}_n )$ is an almost everywhere constant function.


\end{proof}

\subsection{Multiplicative Ergodic Theorem on Natural Extension}

Let $\mu$ be an $f-$invariant
Borel probability measure on $M$. We denote by $\tilde{\mu}$ the $\tilde f-$invariant Borel probability measure
on $M^{f}$ such that $\pi_{*}\tilde{\mu}=\mu$. There exists a full measure subset $\tilde{\mathcal{R}}$ called set of {\it regular points} such that for all
$\tilde{x}=(x_{n})\in \tilde{\mathcal{R}}$ and $n\in \mathbb{Z}$ the tangent space $T_{x_{n}}M$ splits into a direct sum
$$T_{x_{n}}M=E_{1}(\tilde{x}, n)\oplus\cdots\oplus E_{r(x_{0})}(\tilde{x}, n)$$
and there exists 
$-\infty<\lambda_{1}(\tilde x)<\cdots<\lambda_{r(\tilde x)}<\infty$ and $m_{i}(\tilde x)$ ($i=0,1,...,r(\tilde{x})$) such that:

\begin{enumerate}
	\item dim $E_{i}(\tilde{x}, n)=m_{i}(\tilde x)$;
	
	\item $D_{x_{n}}f(E_{i}(\tilde{x}, n))= E_{i}(\tilde{x}, n+1)$, and  $D_{x_{n}}f|_{E_{i}(\tilde{x},n)}:E_{i}(\tilde{x}, n)\rightarrow E_{i}(\tilde{x}, n+1)$ is an isomorphism. For $v\in E_{i}(\tilde{x}, n)\backslash \{0\}$,
	
	\begin{center}
		$\begin{cases}
		\lim_{m\rightarrow\infty}\frac{1}{m}\log\Vert D_{x_{n}}f^{m}(v)\Vert=\lambda_{i}(\tilde{x});\\
		
		\lim_{m\rightarrow\infty}-\frac{1}{m}\log\Vert(D_{x_{n- m}}f^{m}|_{E_{i}(\tilde{x},n-m)})^{-1}(v)\Vert=\lambda_{i}(\tilde{x});
		\end{cases}$
	\end{center} 
	\item if $i\neq j$ then
	$$\lim_{n\rightarrow\pm\infty} \frac{1}{n}\log\sin\angle(E_{i}(\tilde{x}, n), E_{j}(\tilde{x}, n))=0,$$
	where $\angle(V, W)$ denotes the angle between sub-spaces $V$ and $W$.
	\item $r(.), \,\lambda_{i}(.)\,\,and\,\,m_{i}(.)$ are measurable and $\tilde f-$invariant. Moreover $r(\tilde x)=r(x_{0}),\,\lambda_{i}(\tilde x)=\lambda_{i}(x_{0})$ and $m_{i}(\tilde{x})=m_{i}(x_{0})$ for all $i=1,2,...,r(\tilde x)$.
\end{enumerate}

From now on we work with ergodic measures and the Lyapunov exponents are constant almost everywhere with respect to the reference measure. The celebrated Pesin's blocks are defined naturally in the non-invertible case in the limit inverse space. We use a simple definition which is enough for our purpose. Let $\mu$ be an ergodic invariant measure and $\lambda$ (resp. $\theta$) the least in modulus positive (resp. negative) Lyapunov exponent. Suppose that $\mu$ has $k$ negative Lyapunov exponents.

\begin{definition}[\textbf{Pesin Blocks}]\label{1-08}
	Fix $0 < \epsilon \ll 1.$ For any $l>1$, we define a Pesin block $\tilde{\Delta}_{l}$ of $M^{f}$ consisting of
	$\tilde{x}=(x_{n})\in M^{f}$ for which there exists a sequence of splittings $T_{x_{n}}M=E^{s}(\tilde{x}, n)\oplus E^{u}(\tilde{x}, n)$,
	$n\in \ZZ$, satisfying:
	
	\begin{itemize}
		\item $\dim E^{s}(\tilde{x}, n)=k$ ;
		\item $D_{x_{n}}f(E^{s}(\tilde{x},n))= E^{s}(\tilde{x}, n+1),$ $D_{x_{n}}f(E^{u}(\tilde{x}, n))=E^{u}(\tilde{x}, n+1)$;
		\item for $m\geq 0, \, v\in E^{s}(\tilde{x}, n)$ and $w\in E^{u}(\tilde{x}, n);$\\
		
		$\begin{cases}
		\Vert D_{x_{n}}f^{m}(v)\Vert\leq e^{l} e^{-(\theta-\epsilon)m } e^{(\epsilon |n|)}\Vert v \Vert,\forall n\in\ZZ,n\geq 1\\
		
		\Vert (D_{x_{n-m}}f^{m}|_{E^{u}(\tilde{x},n-m)})^{-1}(w) \Vert\leq e^{l}e^{-(\lambda-\epsilon ) m} e^{(\epsilon |n-m|)}\Vert w \Vert,\forall n\in\ZZ,n\geq 1;
		\end{cases}$\\
		
		\item sin $\angle(E^{s}(\tilde{x}, n),$ $E^{u}(\tilde{x}, n))\geq e^{-l}e^{-\epsilon |n|}$.
	\end{itemize}
 In the above definition it is enough to take $\epsilon$ less than $\frac{1}{2} \min \{ \lambda, \theta \}.$
\end{definition}
Pesin blocks are compact subsets of $M^{f}$ where the subspaces $E^{s}(\tilde{x}, n)$ and $E^{u}(\tilde{x}, n)$ of $T_{x_{n}}M$ depend continuously on $\tilde{x}$ and $\tilde{f}^{\pm}(\tilde{\Delta}_{l})\subset\tilde{\Delta}_{l+1}$.

\section{Stable, Unstable Sets, SRB Property} \label{stable-unstable}
After the works of Pesin on general theory of stable and unstable manifolds for non-uniformly hyperbolic diffeomorphisms 
(see \cite{7}), P.-D Liu and M. Qian \cite{36} developed a rigorous related theory for random diffeomorphisms. Using similar techniques, Sh. Zhu proved an unstable manifold theorem for non-invertible differentiable maps of finite dimension \cite{21} (see \cite{10} for more details.)  Here we would like to emphasize the differences between unstable and stable sets (and manifolds).

\begin{definition}[\textbf{Local Unstable Manifold}]\label{1-16}
	
 Let $\tilde x\in\tilde{\mathcal{R}}$ and $\lambda$ the least positive Lyapunov exponent of $\mu$. We call $W_{loc}^{u}(\tilde x)$ a \textit{local unstable manifold} of $f$ at $\tilde x$ when exists a $u-$dimensional $C^2-$embedded sub-manifold of $M$ ($u$ is the number of positive Lyapunov exponents.) such that there are $\epsilon, C >0$, and for any $y_{0}\in W^{u}_{loc}(\x)$, there exists a unique $\tilde{y}=\{y_{n}\}_{n\in \mathbb{Z}}\in M^{f}$ such that $\pi(\tilde y)=y_0$ and $\forall n\in \mathbb{N},$
	$$d(y_{-n},x_{-n})\leq C\,e^{-n(\lambda-\epsilon)}\,d(x_0,y_0)$$
 Moreover we define the \textbf{local unstable set} of $\tilde{f}$ at $\tilde x=(x_n)$ as  
	$$\widetilde{W}^{u}_{loc}(\tilde x):=\{\tilde y \in M^{f}: y_0\in W^{u}_{loc}(\tilde x),d(y_{-n},x_{-n})\leq C\,e^{-n(\lambda-\epsilon)}\,d(x_0,y_0)\}.$$
\end{definition}
It comes out that $\pi(\widetilde{W^{u}_{loc}}(\tilde{x}))=W^{u}_{loc}(\tilde{x})$ is the local unstable manifold of $\x.$ 

\begin{definition}[\textbf{Unstable Manifold}]\label{1-17}
 The unstable manifold of $f$ corresponding to $\tilde x\in \tilde{\mathcal{R}}$ is defined as
	
	\begin{equation}\label{1-10}
		W^{u}(\tilde x)=\{y_0\in M|\,\exists\tilde y\in M^{f}\,\,with\,\,\pi\tilde y=y_0,\,\,and\,\,\overline\lim_{n\rightarrow+\infty}\,\frac{1}{n}log\,d(x_{-n},y_{-n})<0\}\}
	\end{equation}
	
 and we will write 
	
	\begin{equation}\label{1-11}
		\widetilde{W}^{u}(\tilde x)=\{\tilde y\in M^{f}|\overline\lim_{n\rightarrow+\infty}\,\frac{1}{n}log\,d(x_{-n},y_{-n})<0\}.
	\end{equation}
	
Notice that $\pi(\widetilde{W}^{u}(\tilde x))= W^{u}(\tilde x)$ and the global unstable manifold $W^{u}(\tilde x)$ is the union of forward images of local unstable manifolds at $x_{-n}.$
	%
	
\end{definition}

	
	

\begin{figure}[]\label{Fig:US}
\minipage{0.50\textwidth}
  \includegraphics[width=0.9\linewidth]{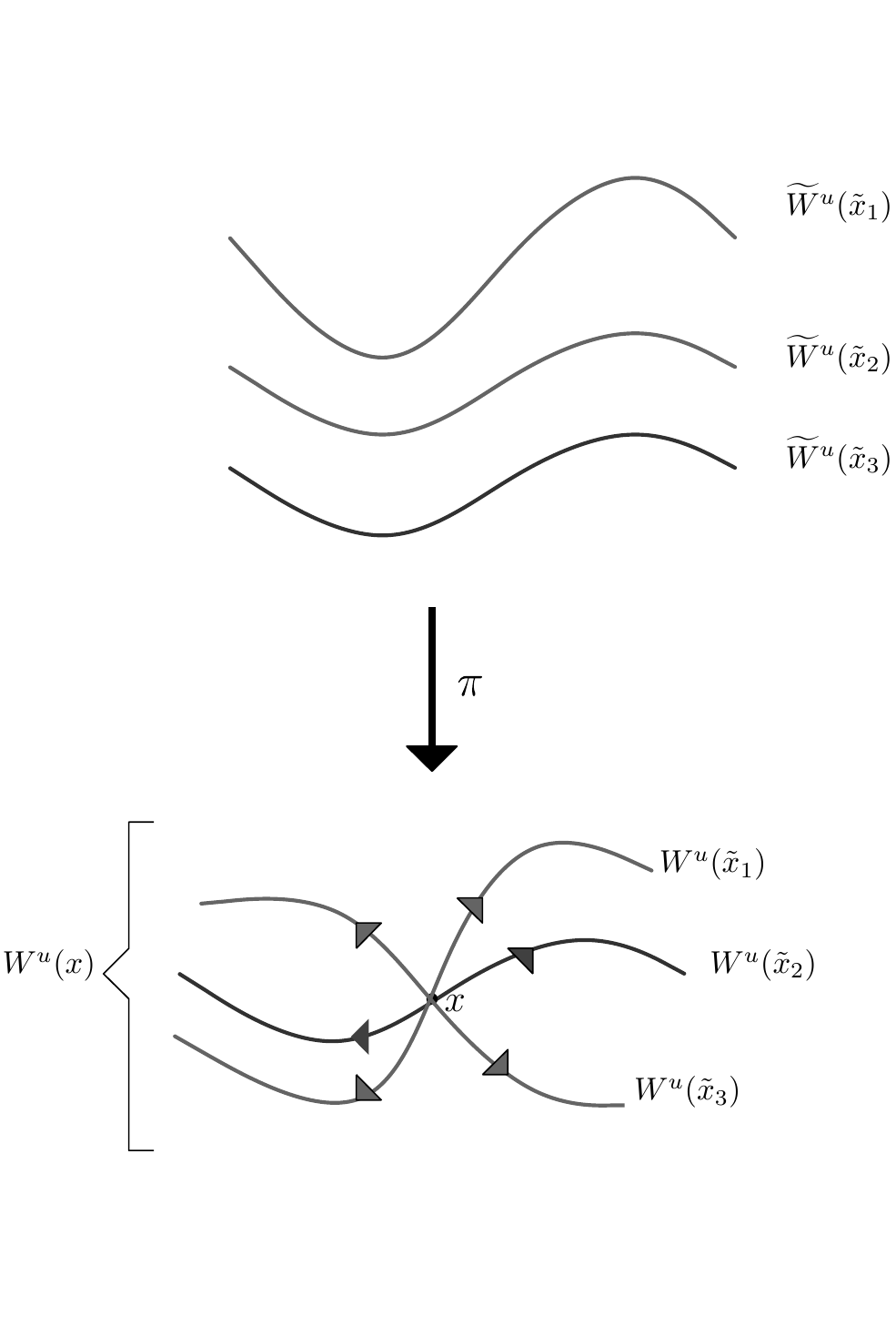}
		
\endminipage\hfill
\minipage{0.50\textwidth}
  \includegraphics[width=1\linewidth]{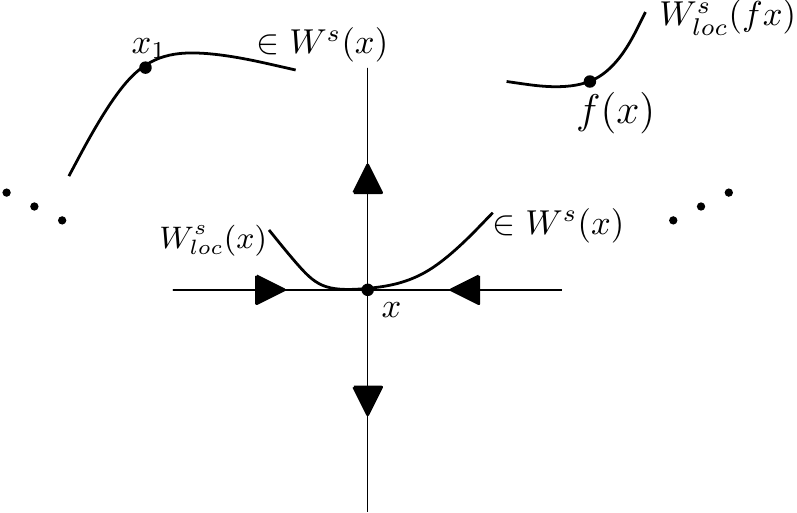}
\endminipage\hfill
\caption{unstable manifolds of different trajectories and stable set of a point ($x,x_1\in f^{-1}(fx)$).}
\end{figure}

Observe that the global unstable set is not necessarily a manifold and it is defined for $x \in M$ and not $\x \in M^f.$

\textbf{Local Stable Manifolds:}
 Besides the case of unstable manifolds, the local stable manifolds are defined uniquely for $x \in M$, when they exist. In fact $ W^s_{loc}(\x)$  is defined exactly as in the case of invertible dynamics. So, we may use the notation $W^s_{loc}(x)$ or $W^s_{loc}(\x)$ for the same object. In this text, we use $W^s_{loc}(\x).$
However, the global stable {\bf set} is defined as follow:
$$
W^{s}(x)=\bigcup^{+\infty}_{n=0}f^{-n}(W^s_{loc}({f^n(x)}))
$$
Observe that the global stable set is not necessarily a connected {\bf manifold} (see figure 1).

Using definitions and local stable-unstable manifolds theorems from \cite{10}, one concludes the following invariance properties :
\begin{itemize}
	\item $f(W^{s}(\x))=W^{s}(\tilde{f}(\x))$;
	\item $f(W^{u}(\x))=W^{u}(\tilde{f}(\x)).$
\end{itemize}

\subsection{Heteroclinic Relation and Incliniation Lemma}
\label{homoclinici}
Let $p, q$ be two hyperbolic periodic points. We write $[p, q] \neq \emptyset$ iff $W^u(\bar{p}) \pitchfork W^s_{loc}(\mathcal{O}(q)) \neq \emptyset.$

In what follows we recall the well known inclination lemma. In typical texts in dynamics, the inclination lemma is proved for invertible dynamics. However, there is a small subtle difference between the invertible and non-invertible case. 

\begin{lemma}[\textbf{$\lambda-$Lemma}]\label{4-7}
Let $p$ be a hyperbolic fixed point of map $f$ and $D \subset W^u (\bar p)$ be a compact disk. If $\Sigma$ is an embedded $C^1$ sub-manifold of $M$ intersecting $W^s_{loc}(p)$ transversally, then for any large $n,$  $f^n(\Sigma)$ contains an embedded manifold $\Sigma_n$ which is $C^1-$close to $D$  where $\bar p=(...pppp...)\in M^f$. 
\end{lemma}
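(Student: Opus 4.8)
The plan is to reduce the non-invertible statement to the classical invertible one by localizing near the periodic point $p$, where $f$ is a genuine diffeomorphism onto its image. First I would pass to an iterate so that $p$ is a fixed point (replacing $f$ by $f^N$ changes nothing essential, since $W^u(\bar p)$ and $W^s_{loc}(p)$ are the same sets). Then I would choose a small neighborhood $U$ of $p$ on which $f$ is a $C^2$-diffeomorphism onto $f(U)$; since $p$ is hyperbolic, after a local $C^1$ change of coordinates we may assume $U$ is a product chart $B^s\times B^u$ in which $W^s_{loc}(p)=B^s\times\{0\}$, $W^u_{loc}(\bar p)=\{0\}\times B^u$, and the restriction $f|_U$ contracts the $B^s$-direction and expands the $B^u$-direction in the usual cone sense. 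The crucial remark is that $W^u(\bar p)$ is, by the definition in Section \ref{stable-unstable}, the union of forward images $f^j(W^u_{loc}(\widetilde{f}^{-j}(\bar p)))=f^j(W^u_{loc}(\bar p))$, so the given compact disk $D\subset W^u(\bar p)$ already lies in $f^{j_0}(W^u_{loc}(\bar p))$ for some finite $j_0$; hence it suffices to $C^1$-approximate a compact disk $D'\subset W^u_{loc}(\bar p)$ (and then apply $f^{j_0}$, which is a fixed smooth map).

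Next I would run the standard graph-transform argument entirely inside $U$. Let $\Sigma$ meet $W^s_{loc}(p)$ transversally at a point $z$; by the transversality, $T_z\Sigma$ contains a $u$-dimensional subspace lying in the unstable cone field. Take a small disk $\Sigma^0\subset\Sigma$ through $z$; then $f(\Sigma^0)\cap U$ contains a disk whose tangent spaces are pushed strictly inside the unstable cone (cone-field invariance, which only uses $Df$ on $U$ and is insensitive to non-invertibility of $f$ off $U$). Iterating, $f^n(\Sigma^0)\cap U$ contains disks $\Sigma_n$ that are graphs over the $B^u$-direction with derivatives tending to $0$, while the contraction in the $B^s$-direction forces the base points of these graphs toward $0$; the uniform hyperbolicity estimates give geometric convergence, so $\Sigma_n\to W^u_{loc}(\bar p)$ in the $C^1$ topology on compact subsets. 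This is verbatim the classical proof, and the only thing to check is that each iterate of the relevant piece of $\Sigma$ actually stays in $U$ long enough — which is guaranteed because once a disk is a near-horizontal graph close to $W^u_{loc}$, its image under $f$ is still in $U$ and closer. Finally, enlarging $\Sigma_n$ by a bounded further number of iterates of $f$ inside $U$ spreads it over all of $W^u_{loc}(\bar p)$, and applying $f^{j_0}$ yields an embedded submanifold inside $f^n(\Sigma)$ that is $C^1$-close to $D$.

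The main obstacle — the one point where the non-invertible setting genuinely intrudes — is bookkeeping of preimages: $f^n(\Sigma)$ may contain many sheets, and a priori $f^n$ applied to a disk can fold or fail to be injective globally, so one must be careful that the specific disk $\Sigma_n$ we track is the one obtained by following the single branch of $\Sigma$ that entered $U$ near $z$ and never left. Because all the dynamics we use happens inside $U$, where $f$ is a diffeomorphism, this branch is well-defined and the argument goes through; the subtlety the authors flag is precisely that one cannot speak of "$f^{-n}$" globally, so the statement is phrased as "$f^n(\Sigma)$ \emph{contains} an embedded $\Sigma_n$" rather than "$f^n(\Sigma)$ is $C^1$-close to $D$". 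I would make this explicit and otherwise cite the classical $\lambda$-lemma for the quantitative cone and convergence estimates.
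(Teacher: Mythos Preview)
Your proposal is correct and follows essentially the same approach as the paper: both reduce to the classical diffeomorphism $\lambda$-lemma by working locally near $p$ where $f$ is invertible and invoking the cone-field argument. The paper's proof is only a two-sentence sketch---it observes that $E^{u}(\bar p)=\bigcap_{k\geq 0}Df_{p}^{k}(C^u(p))$ where $C^u(p)$ is the complement of a thin stable cone, notes that the tangent spaces of $\Sigma$ near the transverse intersection lie outside the stable cone, and then defers to the usual argument---so your write-up is considerably more detailed (the reduction $D\subset f^{j_0}(W^u_{loc}(\bar p))$, the explicit graph-transform iteration inside $U$, and the branch-tracking remark explaining why the conclusion reads ``contains $\Sigma_n$'' rather than ``is $C^1$-close''), but the underlying strategy is the same.
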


The proof is similar to the diffeomorphisms case. We just observe that $E^{u}(\p):=\bigcap_{k=0}^{\infty}Df_{p}^{k}(C^u(p))$ where $C^u(p)$ is the complement of a thin cone around $E^s(p).$ As for any point in a small neighborhood of $\Sigma \cap W^s_{loc}(p)$ the tangent space is outside the stable cone, the same argument of usual $\lambda-$lemma yields the proof. 
\subsection{SRB property}

In this subsection we review the definitions and basic properties of SRB measures.

\begin{definition}[\textbf{Unstable Partition Sub-Ordinate to $W^{u}$}]\label{def: unstable partition}
 A measurable partition $\zeta$ of $M^f$ is said to be \textit{subordinate to $W^u$ manifolds of $(f,\mu)$} if for $\tilde\mu$-a.e. $\tilde x$, $\zeta(\tilde x)$ has the following properties:
	\begin{itemize}
		\item $\pi_|\zeta(\tilde x): \zeta(\tilde x)\rightarrow \pi(\zeta(\tilde x))$ is bijective
		\item There exists a $k(\tilde x)$ dimensional $C^2-$embedded sub-manifold $W_{\tilde x}$ of $M$, such that $W_{\tilde x}\subset W^u(\tilde x),\,\,\pi(\zeta(\tilde x))\subset W_{\tilde x}$ and $\pi(\zeta(\tilde x))$ contains an open neighborhood of $x$ in $W_{\tilde x}$, this neighborhood being taken in the topology of $W_{\tilde x}$ as a sub-manifold of $M$.
	\end{itemize}
\end{definition}
It is always possible to construct measurable partitions sub-ordinated to unstable manifolds (see \cite{10}, \cite{6}.) 

\begin{definition}[\textbf{SRB Property}]\label{def: SRB property}
 An $f-$invariant measure $\mu$ has the SRB property, if for every measurable partition $\zeta$ of ${M^f}$ sub-ordinate to ${W^u}-$manifolds of $(f,\mu)$ we have, for $\tilde \mu$-a.e. $\tilde x \in M^f$,
	$$
	\pi(\tilde {\mu}_{\tilde x}^{\zeta})\prec\prec m_{\tilde x}^{u},
	$$
where $\{\tilde {\mu}_{\tilde x}^{\zeta}\}_{\tilde x\in M^f}$ is a canonical system of conditional measures of $\tilde\mu$ associated with $\zeta$, $\pi(\tilde {\mu}_{\tilde x}^{\zeta})$ is the projection of $\tilde {\mu}_{\tilde x}^{\zeta}$ under $\pi_|\zeta(\tilde x): \zeta(\tilde x)\rightarrow \pi(\zeta(\tilde x))$ and $m_{\tilde x}^{u}$ denotes the Lebesgue measure on $W_{loc}^{u}(\tilde x)$ induced by its inherited Riemannian structure.
\end{definition}

In the diffeomorphisms context, F. Ledrappier and L.-S. Young \cite{4} proved that a measure satisfies Pesin entropy formula, if and only if it is absolutely continuous with respect to (Pesin) unstable manifolds. Moreover, they showed that the densities $d\mu^{u}_{x}/dm^{u}_{x}$ are given by \textbf{strictly positive} functions that are $C^{1}$ along unstable manifolds. ($\mu^{u}_{x}$ is the unstable conditional measure and $m^{u}_{x}$ the induced Lebesgue inherited from Riemannian structure.)

The same results (adapted to the non-invertible case) hold for endomorphisms. This implies that $\pi(\tilde {\mu}_{\tilde x}^{\zeta})\prec\prec m_{\tilde x}^{u}$ in the above definition can be substituted by $\pi(\tilde {\mu}_{\tilde x}^{\zeta}) \approx m_{\tilde x}^{u}$ for $\tilde{\mu}$ almost every $\tilde{x}.$

\section{Admissible Manifolds and Katok Closing Lemma}\label{4}
In this section we try to give simplified definition of admissible manifolds which are tools in the proof of the main theorem. To define admissible manifolds we recall the definition of Lyapunov charts in the non-invertible case of Pesin theory.

Let $f$ be a $C^{2}-$endomorphism of a closed Riemannian surface $M$. Assume that we have a non-empty Pesin block $\tilde{\Delta}_{l}$ for $l>1$ for an ergodic measure with $k-$negative Lyapunov exponents and $(n-k)-$positive Lyapunov exponents. We can change the metric on $\tilde{\Delta}_{l}$ so that $f|_{\tilde{\Delta}_{l}}$``looks uniformly hyperbolic". This happens by replacing the induced Riemannian metric on $T_{x_{n}}M$ ($\tilde{x}=(x_{n})\in \tilde{\Delta}_{l}$) by a new metric which is called Lyapunov metric. 

Let $0<\lambda^ {'}<\mu^{'}<\infty$ such that
\begin{equation}\label{2-00}
\lambda^{'}=\lambda-2\epsilon, \quad \mu^{'}=\mu-\, 2\epsilon.
\end{equation}
We can define the new metric $<.,.>_{\tilde{x}}^{\prime}$ as:
\begin{equation}\label{2-1-00}
< v_{s},w_{s}>_{\tilde{x}}^{\prime}:=\sum_{m=0}^{\infty}< df^{m}(v_{s}),df^{m}(w_{s})>_{x_n}e^{2\lambda^{\prime}m};
\end{equation}
where $v_s,w_s\in E^{s}(\tilde{x},0)$, and
\begin{equation}\label{2-000}
< v_{u},w_{u}>_{\tilde{x}}^{\prime}:=\sum_{m=0}^{\infty}< df^{-m}(v_{u}),df^{-m}(w_{u})>_{x_{-n}}e^{2\lambda^{\prime}m};
\end{equation}
where $v_u,w_u\in E^{u}(\tilde{x},0)$.
Now for $(v,w)\in T_{x_{0}}M$ that $v=v_s+v_u,w=w_s+w_u$, define
\begin{equation}\label{Lyp metric}
<v,w>_{\tilde{x}}^{\prime}:= \max\{< v_{s},w_{s}>_{\tilde{x}}^{\prime},< v_{u},w_{u}>_{\tilde{x}}^{\prime}\}.
\end{equation}
The new metric induces a new norm  $\Vert . \Vert^{\prime}_{\tilde x}$ on $T_{x}M$:

\begin{align*}
&\Vert v_{s}\Vert_{\tilde{x}}^{\prime}=(\sum_{m=0}^{\infty}e^{2\lambda^{\prime}m}\Vert d_{x}f^{m}(v_{s})\Vert^{2})^{1/2}
,\\
&\Vert v_{u}\Vert_{\tilde{x}}^{\prime}=(\sum_{m=0}^{\infty}e^{\mu^{\prime}m}\Vert(d_{x}f^{m}|_{E^{u}(\tilde{x},0)})^{-1}(v_{u})\Vert^{2})^{1/2}
,\\
&\Vert v\Vert_{\tilde{x}}^{\prime}=\max\{\Vert v_{s}\Vert^{\prime}_{\tilde x},\, \Vert v_{u}\Vert_{\tilde{x}}^{\prime}\}.
\end{align*}



One can verify that for $v_{s}\in E^{s}(\tilde x,0),\,v_{u}\in E^{u}(\tilde x,0)$:
$$
\Vert d_{x_{0}}f(v_{s})\Vert^{\prime}_{\tilde{f}(\tilde x)}\leq e^{-\lambda^{'}}\Vert v_{s} \Vert^{\prime}_{\tilde x}
,$$
\begin{equation*}
\Vert d_{x_{0}}f(v_{u})\Vert^{\prime}_{\tilde{f}(\tilde x)}\geq e^{\mu^{'}}\Vert v_{u} \Vert^{\prime}_{\tilde x}.
\end{equation*}

There exist also the following estimate on the norms ($\Vert.\Vert$ is the induced Riemannian norm on $T_{x}M$).
\begin{equation}\label{2-1}
\frac{1}{2}\Vert v \Vert\leq\Vert v \Vert_{\tilde{x}}^{\prime}\leq \textit{a}_{l} \Vert v \Vert\,\,\,\,\,\,\,\,\,\,\forall\tilde{x}=(x_{n})\in\tilde{\Delta}_{l}
\end{equation}

Usually $\Vert . \Vert^{\prime}_{\tilde{x}}$ is called \textbf{Lyapunov norm}.
The following proposition is about the existence of Lyapunov charts in the context of local diffeomorphisms and its proof is a simple adaptation of proposition (2.3) of \cite{8}. 
\begin{proposition}\label{2-01}
There exists a number $r>0$ so that for every point $\tilde{x}\in \tilde{\Delta_{l}}$ we can find a neighborhood $B(\tilde{x})$ around the point $x=\pi(\tilde{x})$ and a diffeomorphism $\Phi_{\tilde{x}}:B^k_{r}\times B^{\dim(M)-k}_{r}\rightarrow B(\tilde{x})$($B^{d}_{r}$ is Euclidean closed disc of radius $r$ around the origin in $\mathbb{R}^d$).
Also there exists a family of $C^{1}-$maps $F_{\tilde{x}}: B^k_{r}\times B^{\dim(M)-k}_{r}\rightarrow \mathbb{R}^k\times \mathbb{R}^{\dim(M)-k}$ satisfying the following properties:
\begin{enumerate}
  \item $\Phi_{\tilde{x}}(0)=\pi(\tilde{x});\,\,\,\,\, $
  \item $F_{\tilde{x}}(z)=\Phi_{\tilde{f}(\tilde{x})}^{-1}\circ f\circ\Phi_{\tilde{x}}(z)\ \ \ \ \ \ \ \ \ \ \ \ \ \ \ \ $ \text{for} $z=(u,v);$
  \item $F_{\tilde{x}}$ has the form:
$$F_{\tilde{x}}(u,v)=(A_{\tilde{x}}\,u+h^{1}_{\tilde{x}}(u,v),\,B_{\tilde{x}}\,v+h^{2}_{\tilde{x}}(u,v)),$$
such that:
$$h^{2}_{\tilde{x}}(0,0)=h^{2}_{\tilde{x}}(0,0)=0,\,\,\,dh^{1}_{\tilde{x}}(0,0)=dh^{2}_{\tilde{x}}(0,0)=0$$ \\
and
$$\Vert A_{\tilde{x}}\Vert\leq\,e^{- \lambda^{'}}, \,\,\,\,\,\,\,\,\,\,\Vert B_{\tilde{x}}\Vert\geq\, e^{\mu^{'}}.$$
(all the norms are considered as Euclidean.)

For $z\in B^k_{r}\times B^{\dim(M)-k}_{r}$ let $ h_{\tilde{x}}(z)=(h^{1}_{\tilde{x}}(z),h^{2}_{\tilde{x}}(z))$, then
$$
\Vert (dh_{\tilde{x}})_{z_{1}}-(dh_{\tilde{x}})_{z_{2}} \Vert\leq \Upsilon\,\textit{a}_{l}\, \Vert z_{1}-z_{2}\Vert$$ where $\Upsilon$ is an absolute constant.

  \item the metric $\Vert . \Vert ^{\prime}_{\tilde{x}}$ depends continuously on $\tilde{x}$ over any Pesin block $\tilde{\Delta}_{l}$.

\end{enumerate}
\end{proposition}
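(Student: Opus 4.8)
The plan is to reproduce the classical construction of Lyapunov (Pesin) charts — Proposition 2.3 of \cite{8} — carried out along the orbits of the homeomorphism $\tilde f$ on the inverse limit $M^f$ instead of along the (non-invertible) orbits of $f$ itself; this is the only place where the argument must be modified. Fix $\tilde x=(x_n)\in\tilde\Delta_l$. On $E^s(\tilde x,0)$ and on $E^u(\tilde x,0)$ the Lyapunov norms $\|\cdot\|'_{\tilde x}$ are genuine inner-product norms (each is defined by a convergent series of inner products, see \eqref{2-1-00}--\eqref{2-000}), so one may pick a linear isomorphism $L_{\tilde x}\colon T_{x_0}M\to\mathbb R^k\times\mathbb R^{\dim(M)-k}$ that carries $E^s(\tilde x,0)$ onto $\mathbb R^k\times\{0\}$, carries $E^u(\tilde x,0)$ onto $\{0\}\times\mathbb R^{\dim(M)-k}$, and is isometric on each of these two subspaces from $\|\cdot\|'_{\tilde x}$ to the Euclidean norm. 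Set $\Phi_{\tilde x}:=\exp_{x_0}\circ\,L_{\tilde x}^{-1}$, restricted to $B^k_r\times B^{\dim(M)-k}_r$. Because $M$ is compact and $f$ is a local diffeomorphism, $M$ has a positive injectivity radius and $f$ has a uniform injectivity radius; combined with the comparison $\tfrac12\|v\|\le\|v\|'_{\tilde x}\le a_l\|v\|$ from \eqref{2-1}, this lets us choose $r>0$ (depending only on $l$) small enough that each $\Phi_{\tilde x}$ is a diffeomorphism onto its image and that $\Phi_{\tilde f(\tilde x)}^{-1}\circ f\circ\Phi_{\tilde x}$ — where $\Phi_{\tilde f(\tilde x)}^{-1}$ is read as $L_{\tilde f(\tilde x)}\circ\exp_{x_1}^{-1}$, which is defined on a full neighborhood of $x_1=f(x_0)=\pi(\tilde f(\tilde x))$ — makes sense on all of $B^k_r\times B^{\dim(M)-k}_r$ with values in $\mathbb R^k\times\mathbb R^{\dim(M)-k}$. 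This yields (1), and is exactly the definition (2) of $F_{\tilde x}$.

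For (3) the key point is that, although $f$ is not globally invertible, its restriction $d_{x_0}f\colon E^u(\tilde x,0)\to E^u(\tilde f(\tilde x),0)$ along the fixed history $\tilde x$ is an isomorphism (this is built into the Oseledets decomposition on $M^f$), so $A_{\tilde x}:=L_{\tilde f(\tilde x)}\,d_{x_0}f|_{E^s(\tilde x,0)}\,L_{\tilde x}^{-1}$ and $B_{\tilde x}:=L_{\tilde f(\tilde x)}\,d_{x_0}f|_{E^u(\tilde x,0)}\,L_{\tilde x}^{-1}$ are well defined; since $L$ is a Lyapunov-to-Euclidean isometry on the relevant subspaces, the inequalities $\|d_{x_0}f(v_s)\|'_{\tilde f(\tilde x)}\le e^{-\lambda'}\|v_s\|'_{\tilde x}$ and $\|d_{x_0}f(v_u)\|'_{\tilde f(\tilde x)}\ge e^{\mu'}\|v_u\|'_{\tilde x}$ stated just before the proposition become $\|A_{\tilde x}\|\le e^{-\lambda'}$ and $\|B_{\tilde x}\|\ge e^{\mu'}$ in the Euclidean norm. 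Put $h_{\tilde x}:=F_{\tilde x}-d_0F_{\tilde x}$ and split it into its $\mathbb R^k$- and $\mathbb R^{\dim(M)-k}$-components $h^1_{\tilde x},h^2_{\tilde x}$; then $h^i_{\tilde x}(0)=0$ and $dh^i_{\tilde x}(0)=0$ by construction. The Lipschitz bound $\|(dh_{\tilde x})_{z_1}-(dh_{\tilde x})_{z_2}\|\le\Upsilon a_l\|z_1-z_2\|$ follows from estimating $\|D^2F_{\tilde x}\|$: writing $F_{\tilde x}=L_{\tilde f(\tilde x)}\circ\big(\exp_{x_1}^{-1}\circ f\circ\exp_{x_0}\big)\circ L_{\tilde x}^{-1}$, the bracketed map has second derivative controlled by the uniform $C^2$-norm of $f$ and of $\exp^{\pm1}$ on $M$ (finite by compactness), and the two linear conjugating factors contribute $\|L_{\tilde x}^{-1}\|^2\,\|L_{\tilde f(\tilde x)}\|\le 4\cdot 2a_l$ by \eqref{2-1}; absorbing the purely geometric and $f$-dependent factors into the absolute constant $\Upsilon$ gives the claim (the affine part $d_0F_{\tilde x}$ does not contribute to $D^2F_{\tilde x}$).

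Finally, (4) holds because on the compact Pesin block $\tilde\Delta_l$ the splitting $E^s(\tilde x,n)\oplus E^u(\tilde x,n)$ varies continuously with $\tilde x$ (as recorded after Definition \ref{1-08}) and the series \eqref{2-1-00}--\eqref{2-000} defining $\|\cdot\|'_{\tilde x}$ converge uniformly there, so $\tilde x\mapsto\|\cdot\|'_{\tilde x}$ is continuous and $L_{\tilde x}$, hence $\Phi_{\tilde x}$, can be chosen continuously in $\tilde x$. The only genuine obstacle, and the only deviation from the invertible proof of \cite{8}, is the non-invertibility bookkeeping: one must keep every chart indexed by the full history $\tilde x\in M^f$ so that $E^u(\tilde x,0)$ is defined, check that $f$ is injective on each chart domain so that $F_{\tilde x}$ is an honest local diffeomorphism, and verify that the expansion estimate for $B_{\tilde x}$ uses only invertibility of $df$ restricted to the fixed unstable history and never a global inverse of $f$. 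Once this is arranged, the remaining estimates are identical to the diffeomorphism case.
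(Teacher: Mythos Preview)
Your proposal is correct and follows exactly the approach the paper indicates: the paper does not write out a proof at all, merely stating that the proposition ``is a simple adaptation of proposition (2.3) of \cite{8},'' and what you have done is precisely to spell out that adaptation, indexing charts by full histories $\tilde x\in M^f$ so that the unstable bundle and Lyapunov norm are well defined, and checking that the hyperbolicity and $C^2$-estimates go through using only local invertibility of $f$. Your treatment is thus more detailed than the paper's, but identical in spirit and method.
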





\begin{figure}
\centering
\includegraphics[width=0.45\linewidth]{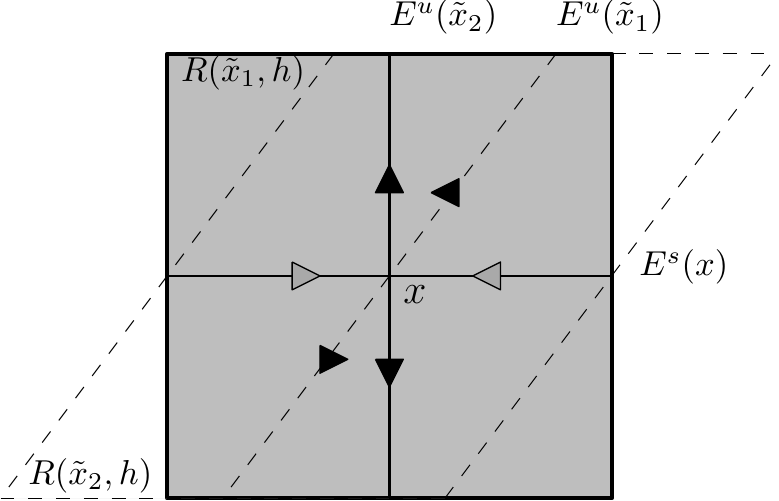}
\caption{unstable manifolds for  $\tilde{x}_1\neq \tilde{x}_2$ such that $\pi(\tilde{x}_1)=\pi(\tilde{x}_2)=x$.}
\label{fig:Lyp chart}
\end{figure}


The set of  admissible manifolds is just the set of graph of $C^1-$functions defined locally and with bounded Lipschitz constant. The difference with the invertible setting is that they depend on $\x$ and not just $\pi(\x) = x.$

To be more precise, we define the class of  $(\gamma, \delta, h)$ stable-admissible and $(\gamma, \delta, h)$ unstable-admissible manifolds  close to $x$ as follow: 
\begin{equation}
S_{\tilde{x}}^{\gamma,\delta,h}=\{\Phi_{\tilde{x}}(graph\, \phi) | \phi\in C^{1}(B^{k}_{h},\,B^{n-k}_{h}),\Vert\phi(0)\Vert\leq\delta,\,\Vert d\,\phi\Vert\leq\gamma\}
\end{equation}
\begin{equation}\label{2-002}
U_{\tilde{x}}^{\gamma,\delta,h}=\{\Phi_{\tilde{x}}(graph\, \phi)| \phi\in C^{1}(B^{n-k}_{h},\,B^{k}_{h}),\Vert\phi(0)\Vert\leq\delta,\,\Vert d\,\phi\Vert\leq\gamma\},
\end{equation}
where by $B^k_h, B^{n-k}_h$ we denote the ball of radius $h$ around origin respectively in the stable and unstable bundle. A simple transversality argument implies:

\begin{proposition}\label{2-2}
Let $\tilde{x}\in \tilde{\Delta}_{l}$ and $h > 0$ small. Then for small constants $\gamma, \delta$ any stable-admissible manifold in $S_{\tilde{x}}^{\gamma,\delta,h}$ intersects any unstable-admissible manifold in $U_{\tilde{x}}^{\gamma,\delta,h}$ at exactly one point and the intersection is transversal. The constants can be chosen universal in a Pesin block.
\end{proposition}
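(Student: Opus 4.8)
The plan is to reduce everything to a fixed-point problem in a single Lyapunov chart and then invoke the contraction mapping principle. By definition, every element of $S_{\tilde{x}}^{\gamma,\delta,h}$ is $\Phi_{\tilde{x}}(\graph\phi)$ for some $\phi\in C^{1}(B^{k}_{h},B^{n-k}_{h})$ with $\|\phi(0)\|\le\delta$, $\|d\phi\|\le\gamma$, and every element of $U_{\tilde{x}}^{\gamma,\delta,h}$ is $\Phi_{\tilde{x}}(\graph\psi)$ for some $\psi\in C^{1}(B^{n-k}_{h},B^{k}_{h})$ with $\|\psi(0)\|\le\delta$, $\|d\psi\|\le\gamma$, where $\Phi_{\tilde x}$ is the same diffeomorphism in both cases. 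Since $\Phi_{\tilde{x}}$ is a diffeomorphism and preserves transversality, it suffices to prove the statement for $\graph\phi$ and $\graph\psi$ inside $B^{k}_{r}\times B^{n-k}_{r}$.

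A point $(u,v)$ lies on both graphs iff $v=\phi(u)$ and $u=\psi(v)$, i.e. iff $u$ is a fixed point of $G:=\psi\circ\phi$. First I would check that $G$ maps $B^{k}_{h}$ into itself: for $\|u\|\le h$ one has $\|G(u)\|\le\|\psi(0)\|+\gamma\|\phi(u)\|\le\delta+\gamma(\|\phi(0)\|+\gamma\|u\|)\le\delta(1+\gamma)+\gamma^{2}h$, which is $\le h$ whenever $\delta\le h(1-\gamma)$. Second, $\|dG\|\le\|d\psi\|\,\|d\phi\|\le\gamma^{2}<1$ for $\gamma<1$, so $G$ is a contraction of the complete metric space $B^{k}_{h}$; the Banach fixed point theorem gives a unique $u^{\ast}\in B^{k}_{h}$ with $G(u^{\ast})=u^{\ast}$, and $(u^{\ast},\phi(u^{\ast}))$ is the unique common point of the two graphs. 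For transversality at this point note that the two tangent spaces are $\{(w,d\phi(u^{\ast})w)\}$ and $\{(d\psi(v^{\ast})z,z)\}$; solving $(w+d\psi(v^{\ast})z,\,d\phi(u^{\ast})w+z)=(a,b)$ amounts to inverting $\mathrm{Id}-d\psi(v^{\ast})d\phi(u^{\ast})$, which is possible because its perturbation term has norm $\le\gamma^{2}<1$. Hence the two tangent spaces span $\mathbb{R}^{n}$ with an angle bounded below in terms of $\gamma$ alone, and pushing forward by $\Phi_{\tilde x}$ keeps the intersection transversal.

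For the uniformity claim, the key point is that the chart radius $r$ furnished by Proposition \ref{2-01} and the distortion constant $a_{l}$ of \eqref{2-1} are the same for all $\tilde{x}\in\tilde{\Delta}_{l}$; so one fixes $h\le r$ and then the thresholds $\gamma<1$, $\delta\le h(1-\gamma)$ are independent of $\tilde{x}$. I expect the only point demanding care — and the one I would write out explicitly — is translating the Euclidean bounds $\|\phi(0)\|\le\delta$, $\|d\phi\|\le\gamma$ in the chart into genuine geometric control on $M$ (and likewise reading off that $E^{s}(\tilde x,0)$, $E^{u}(\tilde x,0)$ become coordinate subspaces, so that "small derivative" really means "small angle"); this is exactly what properties (1)--(4) of Proposition \ref{2-01} together with \eqref{2-1} supply, and it presents no essential difficulty.
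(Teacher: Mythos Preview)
Your argument is correct and is precisely the standard ``simple transversality argument'' the paper alludes to without spelling out: working in the single chart $\Phi_{\tilde x}$, the intersection of the two graphs reduces to a fixed point of $\psi\circ\phi$, which is a $\gamma^{2}$-contraction of $B^{k}_{h}$, and invertibility of $\mathrm{Id}-d\psi\,d\phi$ gives transversality with angle bounded in terms of $\gamma$ alone. The paper provides no further details, so your write-up is in fact more complete than the original.
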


\subsection{Katok Closing Lemma for Endomorphisms}
Let $\mu$ be an ergodic invariant measure for an endomorphism $f: M \rightarrow M$ with non-zero Lyapunov exponents. The following lemma is an endomorphism version of Katok closing lemma for diffeomorphisms. We emphasize that in our context we deal with local diffeomorphisms and we use Lyapunov charts which depend on trajectories. So, a messy adaptation of the same proof of Katok implies the following closing lemma.
\begin{lemma} (Katok Closing Lemma)\label{2-7} 
	Let $f$ be a $C^2-$endomorphism of a compact Riemannian surface $M$. For any positive numbers $l,\delta$ there exists a number $\varrho=\varrho(l,\delta)>0$ such that if for some point $\tilde{x}\in \tilde{\Delta}_{l}$ (Pesin block) and some integer $m$ one has 
	\begin{equation}\label{2-07}
		\tilde{f}^{m}(\tilde{x})\in \tilde{\Delta}_{l}\,\,\,\,\,\,\,and\,\,\,\,\,\,\,\tilde{d}(\tilde{x},\tilde{f}^{m}(\tilde{x}))<\varrho,
	\end{equation} 
	then there exists a point $z\in M$ and $\bar z\in M^{f}$ such that $z=\pi(\bar z)$ and
	\begin{itemize}
       \item $f^{m}( z)= z$ and $\tilde{f}^{m}(\bar z)=\bar z$;
	   \item $d_{n}^{f}(x,z)<\delta$;($\,\,\,d_{n}^{f}$ is defined as $d_{n}^{f}(x,z)= \max_{0\leq i\leq n-1}\,d(f^{i}x,f^{i}z).$ 
	   \item the point $z$ is a hyperbolic periodic point for $f$ and its $W^{s}_{loc}(z)$ and $W^{u}_{loc}(\bar z)$ are admissible manifolds close to $x$, in the chart of $\x$. 
	\end{itemize} 
\end{lemma}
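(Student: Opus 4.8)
The plan is to adapt Katok's original closing-lemma argument to the inverse-limit setting, carrying the Lyapunov charts along the trajectory rather than over a single point of $M$. First I would fix the Pesin block $\tilde{\Delta}_l$ and, given the hypothesis $\tilde{f}^m(\tilde{x})\in\tilde{\Delta}_l$ with $\tilde d(\tilde x,\tilde f^m(\tilde x))<\varrho$, transfer everything into the chart $\Phi_{\tilde x}$ supplied by Proposition \ref{2-01}. The closeness of $\tilde x$ and $\tilde f^m(\tilde x)$ in the metric $\tilde d$ gives closeness of finitely many coordinates $x_j$ and $x_{j+m}$ on $M$ (the tails are controlled by the exponential weights $2^{-|n|}$ in $\tilde d$), which in turn lets us compare the charts $\Phi_{\tilde f^m(\tilde x)}$ and $\Phi_{\tilde x}$: the transition map $\Phi_{\tilde x}^{-1}\circ\Phi_{\tilde f^m(\tilde x)}$ is $C^1$-close to the identity when $\varrho$ is small. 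Composing with property (2)--(3) of Proposition \ref{2-01}, the return map $G:=\Phi_{\tilde x}^{-1}\circ f^m\circ\Phi_{\tilde x}$, read in the chart of $\tilde x$, becomes a small $C^1$-perturbation of a uniformly hyperbolic linear map of $B^k_r\times B^{n-k}_r$, with a hyperbolic splitting whose contraction/expansion rates are $e^{-\lambda'}$, $e^{\mu'}$ up to controlled errors.

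Next I would apply the standard graph-transform / Hadamard--Perron fixed-point argument to $G$: since $G$ maps a neighborhood of $0$ into itself up to the small translation error coming from $\tilde d(\tilde x,\tilde f^m(\tilde x))<\varrho$, and is hyperbolic with bounded nonlinearity (the Lipschitz bound on $dh_{\tilde x}$ in Proposition \ref{2-01}), the contraction mapping principle on the space of admissible stable graphs $S^{\gamma,\delta,h}_{\tilde x}$ — together with Proposition \ref{2-2} asserting that stable- and unstable-admissible manifolds meet in a unique transverse point — produces a unique fixed point $z_0$ of $G$ in $B_r$. Pulling back, $z:=\Phi_{\tilde x}(z_0)$ satisfies $f^m(z)=z$; its $\tilde f$-periodic lift $\bar z$ is obtained by taking the backward orbit through the unstable chart at each $x_{-jm}$, which converges because of the uniform backward contraction along $E^u$ in the Lyapunov metric, and then $\pi(\bar z)=z$. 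The orbit estimate $d^f_n(x,z)<\delta$ follows from tracking $G^i(z_0)$ against $G^i(0)$ inside the charts and using the two-sided comparison $\tfrac12\|v\|\le\|v\|'_{\tilde x}\le a_l\|v\|$ from \eqref{2-1}; shrinking $\varrho$ relative to $\delta$, $a_l$, $r$ makes this quantitative. Finally, hyperbolicity of $z$ and the fact that $W^s_{loc}(z)$, $W^u_{loc}(\bar z)$ lie in $S^{\gamma,\delta,h}_{\tilde x}$, $U^{\gamma,\delta,h}_{\tilde x}$ respectively is immediate from the construction: these local manifolds are precisely the stable and unstable graphs produced by the graph transform, which by definition are admissible in the chart of $\tilde x$.

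The main obstacle I anticipate is bookkeeping the dependence of the charts on the \emph{whole trajectory} rather than on a point of $M$. In the invertible case $f^m$ returns to a neighborhood of $x$ and one compares $\Phi_x$ with itself; here one must simultaneously control that $\Phi_{\tilde f^m(\tilde x)}$ is close to $\Phi_{\tilde x}$ (so that the fixed point $z_0$ of $G$ genuinely yields an $m$-periodic point, not merely a pseudo-periodic pattern) and that the backward iterates used to define $\bar z$ stay in the domains of the relevant charts along $x_{-m},x_{-2m},\dots$. This requires choosing $\varrho$ small depending on $l$ (through $a_l$, $r=r(l)$, the modulus of continuity in property (4) of Proposition \ref{2-01}) and using that $\tilde d$-closeness controls coordinate-closeness on a window of indices whose length grows like $\log(1/\varrho)$, enough to absorb the finitely many chart comparisons needed. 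Once the charts are aligned, the hyperbolic fixed-point machinery is routine and parallels Katok's proof verbatim, so I would not belabor those estimates.
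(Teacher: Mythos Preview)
Your proposal is correct and follows precisely the route the paper indicates: the paper does not write out a proof of Lemma~\ref{2-7} at all, but merely remarks beforehand that ``a messy adaptation of the same proof of Katok'' works, the only subtlety being that the Lyapunov charts depend on trajectories $\tilde x\in M^f$ rather than on points of $M$. Your outline fleshes out exactly this adaptation---working in the chart $\Phi_{\tilde x}$ of Proposition~\ref{2-01}, comparing it with $\Phi_{\tilde f^m(\tilde x)}$ via the continuity in item~(4), and running the graph-transform fixed-point argument---and correctly identifies the trajectory-dependence of the charts as the sole bookkeeping issue beyond the invertible case.
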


\section{Ergodic Homoclinic Classes and Proof of Theorems} 

The notion of ergodic homoclinic classes comes from the work of \cite{13}. The authors defined this notion proving the (at most) uniqueness of SRB measures for surface transitive diffeomorphisms. Here we define a similar notion for endomorphisms.

\subsection{Ergodic Homoclinic Classes.}\label{4-0}
Let $p \in M$ be a hyperbolic periodic point with period $n$. We define the \textit{Ergodic Homoclinic Class} of $p$ both in limit inverse and in the manifold $M.$ Recall that $\bar{p}$ is the unique periodic point of $\tilde{f}$ such that $\pi(\bar{p}) =p.$ That is, $\bar{p}= (\cdots, p, f(p), \cdots f^{n-1}(p), p, \cdots).$ 

The inverse limit Ergodic Homoclinic Class ($\widetilde{EHC}$) is defined as $\tilde{\Lambda}(\bar{p}):=\tilde{\Lambda}^{s}(\bar{p})\cap\tilde{\Lambda}^{u}(\bar{p})$ where,

\begin{equation}
	\tilde{\Lambda}^s(\bar{p}):=\{\x \in \tilde{\mathcal{R}}\big{|} \exists n \geq 0 ,   W_{loc}^{s}(\tilde{f}^n(\x)) \pitchfork W^{u}(\mathcal{O}(\bar{p}))\neq \emptyset \}
\end{equation}
and 
\begin{equation}
	\tilde{\Lambda}^u(\bar{p}):=\{ \x \in\tilde{\mathcal{R}}\big{|} \exists \tilde{y} \in \tilde{\mathcal{R}}, \pi(\tilde{y}) = \pi(\x), \exists n \geq 0, \, \,  f^n(W^{u}(\y)) \pitchfork W_{loc}^{s}(\mathcal{O}(\bar{p}))\neq \emptyset\}
\end{equation}
Here $\tilde{\mathcal{R}}$ denotes regular points in $M^f.$ Observe that $\pi^{-1}(\pi (\tilde{\Lambda}^{*}(\bar{p}))) = \tilde{\Lambda}^{*}(\bar{p}) $ for $* \in \{s, u\}.$ We denote by $\Lambda^s(p) := \pi(\tilde{\Lambda}^s(\bar{p}))$, $\Lambda^u(p) := \pi(\tilde{\Lambda}^u(\bar{p})) $ and $ \Lambda(p) := \pi(\tilde{\Lambda}(\bar{p}))$.

Suppose that $\mu$ is a hyperbolic $f-$invariant Borel probability measure. 
Take $\x \in \tilde{\Delta}_l$ a recurrent point in the support of $\tilde{\mu}$ restricted to the Pesin block $\tilde{\Delta}_l$. Using closing lemma \ref{2-7} we find a hyperbolic periodic point $\bar{p}$ and we prove two following crucial lemmas about the ergodic homoclinic class of $\bar{p}.$

\begin{lemma} \label{4-5}
	Let $p$ be a periodic point obtained as above, then $\tilde{\mu}(\tilde{\Lambda}(\bar p))>0.$
	
\end{lemma}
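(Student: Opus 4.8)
The plan is to exhibit a positive-$\tilde\mu$-measure subset of the Pesin block $\tilde{\Delta}_l$, sitting near $\tilde x$, that is entirely contained in $\tilde{\Lambda}(\bar p)$, by using the transversality of admissible manifolds (Proposition \ref{2-2}) inside the single Lyapunov chart $\Phi_{\tilde x}$. Recall that $\tilde x\in\tilde{\Delta}_l$ was chosen recurrent and in the support of $\tilde\mu|_{\tilde{\Delta}_l}$, and that Lemma \ref{2-7} produced a hyperbolic periodic point $p=\pi(\bar p)$ whose local invariant manifolds are admissible in the chart of $\tilde x$, i.e. $W^s_{loc}(p)\in S^{\gamma,\delta,h}_{\tilde x}$ and $W^u_{loc}(\bar p)\in U^{\gamma,\delta,h}_{\tilde x}$. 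Fix a small $\rho>0$ and put $A:=\tilde{\Delta}_l\cap \tilde B(\tilde x,\rho)\cap\tilde{\mathcal R}$; since $\tilde x$ lies in the support of $\tilde\mu|_{\tilde{\Delta}_l}$ and $\tilde{\mathcal R}$ has full measure, $\tilde\mu(A)>0$.

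The heart of the argument is the claim that, for $\rho$ small enough, every $\tilde y\in A$ has $W^s_{loc}(\tilde y)\in S^{\gamma,\delta,h}_{\tilde x}$ and $W^u_{loc}(\tilde y)\in U^{\gamma,\delta,h}_{\tilde x}$. This follows by combining the continuous dependence of the splitting $E^s(\tilde y,0)\oplus E^u(\tilde y,0)$ and of the local stable and unstable manifolds on $\tilde y$ over the compact block $\tilde{\Delta}_l$ with the Lyapunov-chart estimates of Proposition \ref{2-01}: when $\tilde y$ is $\tilde d$-close to $\tilde x$, the point $\pi(\tilde y)$ is close to $x=\pi(\tilde x)$ and the tangent spaces of $W^{s}_{loc}(\tilde y)$ and $W^{u}_{loc}(\tilde y)$ are $C^0$-close to $E^s(\tilde x,0)$ and $E^u(\tilde x,0)$, so in the chart $\Phi_{\tilde x}$ both manifolds are graphs of $C^1$ maps satisfying the admissibility bounds $\Vert\phi(0)\Vert\le\delta$ and $\Vert d\phi\Vert\le\gamma$. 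Granting this, Proposition \ref{2-2} yields, for every $\tilde y\in A$, that $W^s_{loc}(\tilde y)\pitchfork W^u_{loc}(\bar p)\neq\emptyset$ and $W^u_{loc}(\tilde y)\pitchfork W^s_{loc}(\bar p)\neq\emptyset$. Taking $n=0$ and using $W^u_{loc}(\bar p)\subset W^u(\mathcal{O}(\bar p))$ gives $\tilde y\in\tilde{\Lambda}^s(\bar p)$; taking $\tilde y'=\tilde y$ as the auxiliary point (so that $\pi(\tilde y')=\pi(\tilde y)$), $n=0$, and using $W^u_{loc}(\tilde y)\subset W^u(\tilde y)$ together with $W^s_{loc}(\bar p)\subset W^s_{loc}(\mathcal{O}(\bar p))$ gives $\tilde y\in\tilde{\Lambda}^u(\bar p)$. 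Hence $A\subset\tilde{\Lambda}(\bar p)$ and $\tilde\mu(\tilde{\Lambda}(\bar p))\ge\tilde\mu(A)>0$.

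I expect the main obstacle to be the admissibility claim of the second paragraph: one must ensure, uniformly over a genuine $\tilde d$-neighborhood of $\tilde x$ in $\tilde{\Delta}_l$, that the local invariant manifolds of $\tilde y$ are representable in the fixed chart $\Phi_{\tilde x}$ with the prescribed constants $(\gamma,\delta,h)$. This is delicate in the non-invertible setting because $W^u_{loc}(\tilde y)$ depends on the entire backward itinerary recorded by $\tilde y\in M^f$, not just on $\pi(\tilde y)$; controlling it requires both the closeness of finitely many coordinates of $\tilde y$ to those of $\tilde x$ and the exponential decay built into the metric $\tilde d$, together with the continuity of the Pesin data on the block. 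Once this uniform admissibility is in hand, the transversality step and the measure estimate are automatic.
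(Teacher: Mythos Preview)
Your proof is correct and follows essentially the same approach as the paper: take a small $\tilde d$-ball around $\tilde x$ in the Pesin block with positive $\tilde\mu$-measure, use the $C^1$-continuity of local stable and unstable manifolds on $\tilde{\Delta}_l$ together with item (c) of the closing lemma to conclude that for every $\tilde y$ in this set the relevant transversal intersections with $W^s_{loc}(\bar p)$ and $W^u_{loc}(\bar p)$ occur, hence $\tilde y\in\tilde{\Lambda}(\bar p)$. Your version is in fact more explicit than the paper's, which simply invokes ``transversality arguments'' where you route the argument through the admissible-manifold framework and Proposition~\ref{2-2}; the ``main obstacle'' you flag is exactly what the paper abbreviates as continuity of the invariant manifolds over the Pesin block.
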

Let $\tilde{B}$ be a small ball around $\x$ such that $\tilde{\mu}(\tilde{B} \cap \tilde{\Delta}_l) > 0$ . 
By the item (c) in the closing lemma \ref{2-7}, $W^u_{loc}(\bar{p})$ and $W^s_{loc}(\bar{p})$ are respectively close to $W^u_{loc}(\tilde{x})$ and $W^s_{loc}(\x).$
By continuity of stable and unstable manifolds in the Pesin blocks, for any point $\tilde{y}\in \tilde{B} \cap \tilde{\Delta}_l$ we have that $W^u_{loc}(\y)$ and $W^s_{loc}(\y)$ are respectively close to $W^u_{loc}(\tilde{x})$ and $W^s_{loc}(\x)$ in the $C^1-$topology.
Consequently by transversality arguments we conclude that $\tilde{y} \in \tilde{\Lambda}(\bar p).$ 

\begin{lemma}\label{4-6}
	$\tilde{\Lambda}(\bar p)$ is $\tilde{f}-$invariant.
\end{lemma}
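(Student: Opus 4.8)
The plan is to prove $\tilde f$-invariance of $\tilde\Lambda(\bar p) = \tilde\Lambda^s(\bar p)\cap\tilde\Lambda^u(\bar p)$ by showing separately that $\tilde f(\tilde\Lambda^s(\bar p)) = \tilde\Lambda^s(\bar p)$ and $\tilde f(\tilde\Lambda^u(\bar p)) = \tilde\Lambda^u(\bar p)$; since $\tilde f$ is a homeomorphism of $M^f$, intersecting two invariant sets gives an invariant set. Because $\tilde f$ is invertible, it suffices to prove one inclusion, say $\tilde f(\tilde\Lambda^*(\bar p)) \subset \tilde\Lambda^*(\bar p)$, for $* \in \{s,u\}$, and then apply the same argument to $\tilde f^{-1}$; alternatively one checks both inclusions directly, which is what I will sketch since the two directions use slightly different bookkeeping on the index $n$.

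For the stable part: take $\x \in \tilde\Lambda^s(\bar p)$, so $\x \in \tilde{\mathcal R}$ and there is $n \geq 0$ with $W^s_{loc}(\tilde f^n(\x)) \pitchfork W^u(\mathcal O(\bar p)) \neq \emptyset$. First, $\tilde f(\x) \in \tilde{\mathcal R}$ since the regular set is $\tilde f$-invariant (item (4) of the Oseledets statement). Now observe $\tilde f^n(\x) = \tilde f^{n-1}(\tilde f(\x))$, so if $n \geq 1$ the witness index for $\tilde f(\x)$ is simply $n-1 \geq 0$ and we are done immediately. The only case needing care is $n = 0$, i.e. $W^s_{loc}(\x) \pitchfork W^u(\mathcal O(\bar p)) \neq \emptyset$: here I apply $f$ to the intersection point. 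Using the invariance property $f(W^s(\x)) = W^s(\tilde f(\x))$ recorded in Section~\ref{stable-unstable} together with $f(W^u(\mathcal O(\bar p))) = W^u(\mathcal O(\bar p))$ (the orbit of $\bar p$ is $\tilde f$-invariant and unstable sets push forward under $f$), the image point lies in $W^s(\tilde f(\x)) \cap W^u(\mathcal O(\bar p))$. Since $f$ is a local diffeomorphism it preserves transversality, and because $f$ eventually maps $W^s_{loc}$ into $W^s_{loc}$ along the orbit, for some $n' \geq 0$ the transverse intersection point lands in $W^s_{loc}(\tilde f^{n'}(\tilde f(\x)))$; hence $\tilde f(\x) \in \tilde\Lambda^s(\bar p)$. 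The reverse inclusion $\tilde\Lambda^s(\bar p) \subset \tilde f(\tilde\Lambda^s(\bar p))$ is the statement that $\tilde f^{-1}(\x) \in \tilde\Lambda^s(\bar p)$, which follows because the witness index for $\tilde f^{-1}(\x)$ is $n+1$.

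For the unstable part: let $\x \in \tilde\Lambda^u(\bar p)$, with $\tilde y \in \tilde{\mathcal R}$, $\pi(\tilde y) = \pi(\x)$, and $n \geq 0$ with $f^n(W^u(\tilde y)) \pitchfork W^s_{loc}(\mathcal O(\bar p)) \neq \emptyset$. For $\tilde f(\x)$, take $\tilde y' := \tilde f(\tilde y) \in \tilde{\mathcal R}$; then $\pi(\tilde y') = f(\pi(\tilde y)) = f(\pi(\x)) = \pi(\tilde f(\x))$. Using $f(W^u(\tilde y)) = W^u(\tilde f(\tilde y)) = W^u(\tilde y')$ we get $f^{n}(W^u(\tilde y')) = f^{n+1}(W^u(\tilde y))$, which transversally meets $W^s_{loc}(\mathcal O(\bar p))$ by hypothesis (iterating the transverse intersection forward one more step and invoking preservation of transversality under the local diffeomorphism $f$, plus $f(W^s_{loc}(\mathcal O(\bar p))) \subset W^s_{loc}(\mathcal O(\bar p))$ up to shifting along the finite orbit). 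Hence $\tilde f(\x) \in \tilde\Lambda^u(\bar p)$. For the reverse inclusion, given $\x$ with witness data $(\tilde y, n)$ we must produce witness data for $\tilde f^{-1}(\x)$: set $\tilde y'' := \tilde f^{-1}(\tilde y)$, so $\pi(\tilde y'') = \pi(\tilde f^{-1}(\x))$ (both are the $(-1)$-coordinate of the respective sequences, which agree since $\pi(\tilde y)=\pi(\x)$ forces $\tilde y$ and $\x$ to be cohomologous in all past coordinates — more carefully, $\pi\circ\tilde f^{-1}$ of two points with the same $\pi$-image need not coincide, so here I must instead note that $W^u(\tilde y) = f(W^u(\tilde f^{-1}(\tilde y)))$ and feed $n+1$ as the new index, choosing the backward lift of $\tilde y$ that keeps $\pi$-compatibility with $\tilde f^{-1}(\x)$).

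The main obstacle is precisely this last bookkeeping in the unstable direction: because unstable sets in the non-invertible setting are attached to full past-histories $\tilde y \in M^f$ rather than to points of $M$, one must track which history lift is being used and verify the $\pi$-compatibility condition $\pi(\tilde y) = \pi(\x)$ survives under $\tilde f^{\pm 1}$. I expect this to be handled cleanly using the identity $\pi^{-1}(\pi(\tilde\Lambda^u(\bar p))) = \tilde\Lambda^u(\bar p)$ noted just before Lemma~\ref{4-6}, which lets one reduce the whole question to the downstairs sets $\Lambda^s(p), \Lambda^u(p) \subset M$ and use $f(W^u(\x)) = W^u(\tilde f(\x))$, $f(W^s(\x)) = W^s(\tilde f(\x))$ directly; the transversality claims are then routine since $f$ is a local diffeomorphism. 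Everything else (invariance of $\tilde{\mathcal R}$, preservation of transversality, finiteness of the periodic orbit so that "close to $W^s_{loc}$ of the orbit" is stable under $f$) is standard.
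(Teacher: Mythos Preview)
Your approach is essentially the same as the paper's: show invariance of $\tilde\Lambda^s(\bar p)$ and $\tilde\Lambda^u(\bar p)$ separately, in each case checking forward and backward inclusions by shifting the witness index $n$ and, in the boundary case $n=0$, pushing the transverse intersection forward by one application of $f$ (using $f(W^s_{loc}(\tilde x))\subset W^s_{loc}(\tilde f(\tilde x))$, $f(W^u(\tilde x))=W^u(\tilde f(\tilde x))$, periodicity of $\bar p$, and preservation of transversality by the local diffeomorphism). The paper's write-up follows this pattern line by line.

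The bookkeeping issue you single out in the backward inclusion $\tilde\Lambda^u(\bar p)\subset\tilde f(\tilde\Lambda^u(\bar p))$ is real, and the paper does \emph{not} do better: it also takes $(\tilde f^{-1}(\tilde y),\,n+1)$ as the witness for $\tilde f^{-1}(\tilde x)$ and simply concludes $\tilde f^{-1}(\tilde x)\in\tilde\Lambda^u(\bar p)$, implicitly assuming $\pi(\tilde f^{-1}(\tilde y))=\pi(\tilde f^{-1}(\tilde x))$, which does not follow from $\pi(\tilde y)=\pi(\tilde x)$ for a non-injective $f$. Your proposed rescue via the $\pi$-saturation identity $\pi^{-1}(\pi(\tilde\Lambda^u(\bar p)))=\tilde\Lambda^u(\bar p)$ does not by itself close the gap either: it gives $y_{-1}\in\Lambda^u(p)$, not $x_{-1}\in\Lambda^u(p)$. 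So on this one point your sketch is exactly at the paper's level of rigor. For the downstream use this is harmless: the arguments in Theorems~\ref{4-13} and~\ref{4-14} lean on the (unproblematic) full invariance of $\tilde\Lambda^s(\bar p)$, and the forward inclusion $\tilde f(\tilde\Lambda(\bar p))\subset\tilde\Lambda(\bar p)$ together with $\tilde f_*\tilde\mu=\tilde\mu$ already yields $\tilde\mu\bigl(\tilde f^{-1}(\tilde\Lambda(\bar p))\setminus\tilde\Lambda(\bar p)\bigr)=0$, which is all one needs for $\tilde\mu|_{\tilde\Lambda(\bar p)}$ to define an invariant component.
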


 In fact we prove that both $\tilde{\Lambda}^s(\bar{p})$ and $\tilde{\Lambda}^u(\bar{p})$ are invariant. Firstly let us prove that $\tilde{f}(\tilde{\Lambda}(\bar p))\subset \tilde{\Lambda}(\bar p)$. 
Recall that $\tilde{\Lambda}(\bar p)= \tilde{\Lambda}^{s}(\bar p)\cap\tilde{\Lambda}^{u}(\bar p).$
Without loss of generality let suppose that $p$ is a hyperbolic fixed point, then:
\begin{align*}
	\tilde{x}\in \tilde{\Lambda}^{u}(\bar p)&\Rightarrow f^n (W^{u}(\tilde{y}))\pitchfork W_{loc}^{s}(\bar p)\neq\emptyset, \pi(\tilde{y}) = \pi(\x)\\
	&\Rightarrow f^{n+1}(W^{u}(\tilde{y}))\pitchfork W_{loc}^{s}(\bar p)\neq\emptyset\\
	&\Rightarrow f^n(W^{u}(\tilde{f}(\tilde{y}))\pitchfork W_{loc}^{s}(\bar p)\neq\emptyset\\
	&\Rightarrow \tilde{f}(\tilde{x})\in \tilde{\Lambda}^{u}(\bar p).
\end{align*}

On the other hand, if
$\tilde{x}\in \tilde{\Lambda}^{s}(\bar p) \Rightarrow W_{loc}^{s}(\tilde{f}^n(\x))\pitchfork W^{u}(\bar p)\neq\emptyset$ for some $n \geq 0.$ This implies:
\begin{center}
		$\begin{cases}
		 W_{loc}^{s}(\tilde{f}^{n-1} (\tilde{f}(\x))))\pitchfork W^{u}(\bar p)\neq\emptyset, \quad \text{if} \quad n > 0;\\		
		W^s_{loc} (\tilde{f}(\x)) \pitchfork W^{u}(\bar p)\neq\emptyset \quad \text{if} \quad n = 0
		\end{cases}$
	\end{center} 
So, $\tilde{f}(\tilde{x})\in \tilde{\Lambda}^{s}(\bar p).$

Now we prove that $\tilde{\Lambda}(\bar p)\subset\tilde{f}(\tilde{\Lambda}(\bar p))$. We divide the proof into two steps again:
\begin{itemize}
	\item
	$\tilde{\Lambda}^{u}(\bar{p})\subset \tilde{f}(\tilde{\Lambda}^{u}(\bar{p}))$: Take $\tilde{x}=\tilde{f}(\tilde{f}^{-1}(\tilde{x})) \in \tilde{\Lambda}^{u}(\bar{p})$. So by definition there exist $\tilde{y}, \pi(\tilde{y})= \pi(\x)$ and $n \geq 0$ such that $ W^{u}(\tilde{f}^n(\y))) \pitchfork W_{loc}^{s}(\bar{p})\neq \emptyset$ which implies that (by applying $f$)
	
$$W^{u}(\tilde{f}^{n+1}(\tilde{f}^{-1}(\y)))) \pitchfork W_{loc}^s(\bar{p})\neq \emptyset \Rightarrow \tilde{f}^{-1}(\x) \in \tilde{\Lambda}^{u}(\bar{p}).$$

	\item  $\tilde{\Lambda}^{s}(\bar p)\subset \tilde{f}(\tilde{\Lambda}^{s}(\bar{p}))$: Once again taking $\tilde{x}=\tilde{f}(\tilde{f}^{-1}(\tilde{x}))$; by definition of $\tilde{\Lambda}^{s}(\bar p)$ we have:
	
	$$W^{s}(\tilde{f}^{n+1}(\tilde{f}^{-1}(\x))) \pitchfork W^{u}(\bar{p}) = \emptyset$$
		
	which implies $\tilde{f}^{-1}(\x) \in \tilde{\Lambda}^s(\bar{p})$.

\end{itemize}



	
\subsection{Ergodic Criterion.}


In this subsection we give the most important part of the proof and from now on $\mu$ is a measure with SRB property.

\begin{theorem}[Birkhoff Ergodic Theorem for Natural Extension]\label{4-10}
	Let $\tilde f: M^f\to M^f$ be the lift homeomorphism on inverse limit space of $f$ and $\tilde \mu$ the unique $\tilde f-$invariant lift measure of a Borel probability $f-$invariant measure $\mu$ on $M$. Let $\tilde{\phi} \in C(M^{f})$ a continuous function on $M^{f}$. 
	For $\tilde \mu$ almost every point $\tilde x \in M^f$  the following two limits exist $$\tilde{\phi}^{\pm}(\tilde x)=\lim_{n \rightarrow \pm\infty} \frac{1}{n}\sum_{j=0}^{n-1} \tilde{\phi}(\tilde{f}^{j}(\tilde x)).$$ 

Both $\tilde{\phi}^{+}$ and $\tilde{\phi}^{-}$  are  $\tilde \mu-$integrable  $\tilde{f}-$invariant function with $\int \tilde{\phi}^{\pm} d \tilde{\mu} = \int \tilde{\phi} d\tilde{\mu}.$
In particular if $\tilde{\mu}$ is ergodic then $\tilde{\phi}^{\pm}$ are constant functions. \end{theorem}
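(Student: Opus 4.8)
The plan is to deduce this statement from the classical Birkhoff ergodic theorem applied to the invertible system $(M^f,\tilde f,\tilde\mu)$. Since $\tilde f$ is a homeomorphism of the compact metric space $M^f$ and $\tilde\mu$ is $\tilde f$-invariant by construction (Proposition \ref{1}), and $\tilde\phi\in C(M^f)\subset L^1(\tilde\mu)$, the Birkhoff theorem in its two-sided form directly gives that for $\tilde\mu$-a.e. $\tilde x$ the forward average $\tilde\phi^+(\tilde x)=\lim_{n\to+\infty}\frac1n\sum_{j=0}^{n-1}\tilde\phi(\tilde f^j(\tilde x))$ exists. For the backward average, I would note that running Birkhoff's theorem for the homeomorphism $\tilde f^{-1}$ (also $\tilde\mu$-invariant) with the function $\tilde\phi$ yields the existence of $\lim_{n\to+\infty}\frac1n\sum_{j=0}^{n-1}\tilde\phi(\tilde f^{-j}(\tilde x))$ for $\tilde\mu$-a.e. $\tilde x$; reindexing this is precisely $\tilde\phi^-(\tilde x)=\lim_{n\to-\infty}\frac1n\sum_{j=0}^{n-1}\tilde\phi(\tilde f^j(\tilde x))$ after matching signs. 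Taking the intersection of the two full-measure sets gives a single full-measure set on which both limits exist.

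Next I would record the standard properties of the limit functions. Both $\tilde\phi^\pm$ are $\tilde f$-invariant: $\tilde\phi^+\circ\tilde f=\tilde\phi^+$ and $\tilde\phi^-\circ\tilde f=\tilde\phi^-$ wherever defined, which is immediate from the definition of the Ces\`aro averages since adding or dropping one term does not affect the limit. Integrability and the identity $\int\tilde\phi^\pm\,d\tilde\mu=\int\tilde\phi\,d\tilde\mu$ follow from the $L^1$ convergence part of Birkhoff's theorem (or from the dominated convergence theorem, using that $\tilde\phi$ is continuous on the compact space $M^f$ and hence bounded, so the partial averages are uniformly bounded by $\|\tilde\phi\|_\infty$). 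Finally, if $\tilde\mu$ is ergodic, then any $\tilde f$-invariant $L^1$ function is $\tilde\mu$-a.e.\ constant, so $\tilde\phi^+$ and $\tilde\phi^-$ are each a.e.\ equal to $\int\tilde\phi\,d\tilde\mu$.

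In truth there is no real obstacle here: the statement is the Birkhoff ergodic theorem transported to the inverse limit, and the only mildly delicate point is bookkeeping the sign conventions so that ``$n\to-\infty$'' in the displayed formula genuinely corresponds to applying Birkhoff to $\tilde f^{-1}$. I would present the proof as a short paragraph invoking Birkhoff for $\tilde f$ and for $\tilde f^{-1}$, then the two-line verification of invariance, the integral identity via $L^1$-convergence, and the ergodic case. No Pesin theory or endomorphism-specific machinery is needed; the point of isolating the statement is merely that subsequent arguments will use forward \emph{and} backward Birkhoff averages of the same test function along the same orbit, and this lemma guarantees both are available on a common full-measure set.
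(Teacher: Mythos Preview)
Your proposal is correct and matches the paper's treatment: the paper states this theorem without proof, treating it as an immediate consequence of the classical Birkhoff ergodic theorem applied to the invertible system $(M^f,\tilde f,\tilde\mu)$, and follows it only with the remark that $\tilde\phi^+=\tilde\phi^-$ $\tilde\mu$-a.e. Your argument---Birkhoff for $\tilde f$ and for $\tilde f^{-1}$, invariance of the limit functions, the integral identity via $L^1$-convergence, and constancy in the ergodic case---is exactly the intended justification.
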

 
 \begin{remark}
 It comes out from the proof of Birkhoff ergodic theorem that for $\tilde \mu-$a.e $\x\in M^f$,
  $\tilde{\phi}^{+}(\tilde x)=\tilde{\phi}^{-}(\tilde x)$. Such points are called \textbf{typical points}.
\end{remark}

\begin{lemma}\label{4-11}
	There exists an invariant set $\tilde{S}$ of typical points with $\tilde{\mu}(\tilde{S})=1$ such that for all $\tilde{\phi}\in C(M^f)$, if $\tilde x\in\tilde{S}$ then for all $\tilde{\omega}\in \widetilde{W^{s}}(\tilde x)$ and $\tilde{\mu}^{u}_{\tilde x}-$a.e $\tilde{\zeta}\in \widetilde{W^{u}}(\tilde x)$, $$\tilde{\phi}^{+}(\tilde x)= \tilde{\phi}^{+}(\tilde{\zeta}) = \tilde{\phi}^{+}(\tilde{\omega}).$$
\end{lemma}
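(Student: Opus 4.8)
The plan is to obtain $\tilde S$ as a set of points that are simultaneously \emph{typical} for a countable dense family of observables, and then to show that equality of forward Birkhoff averages propagates along stable sets by uniform continuity, and along unstable sets by feeding the SRB property into a comparison of forward and backward averages. The point is that a point of $\widetilde{W^{s}}(\tilde x)$ has forward shift-orbit asymptotic to that of $\tilde x$, while a point of $\widetilde{W^{u}}(\tilde x)$ has backward shift-orbit asymptotic to that of $\tilde x$; the SRB hypothesis is what lets one upgrade the latter from a $\tilde\mu$-a.e.\ statement to an $\tilde\mu^{u}_{\tilde x}$-a.e.\ statement along unstable plaques.

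First I would fix a countable dense set $\{\tilde\phi_k\}_{k\ge1}\subset C(M^f)$ and let $\tilde S_0$ be the set of $\tilde x$ that are typical for every $\tilde\phi_k$ in the sense of the Remark after Theorem \ref{4-10}, i.e.\ $\tilde\phi_k^{+}(\tilde x)$ and $\tilde\phi_k^{-}(\tilde x)$ exist and coincide for all $k$; by Theorem \ref{4-10} one has $\tilde\mu(\tilde S_0)=1$, and since each $\tilde\phi_k^{\pm}$ is $\tilde f$-invariant (and existence of the Birkhoff limit at a point is inherited along its orbit), $\tilde S_0$ is $\tilde f$-invariant. The uniform bound $|\tilde\phi^{\pm}(\tilde x)-\tilde\phi_k^{\pm}(\tilde x)|\le\|\tilde\phi-\tilde\phi_k\|_{\infty}$ then upgrades this: for $\tilde x\in\tilde S_0$ and \emph{every} $\tilde\phi\in C(M^f)$ the limits $\tilde\phi^{\pm}(\tilde x)$ exist and are equal. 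Next, choosing a measurable partition $\zeta$ subordinate to $W^u$ and disintegrating $\tilde\mu$, the SRB property $\pi_*\tilde\mu^{\zeta}_{\tilde x}\approx m^{u}_{\tilde x}$ forces, for $\tilde\mu$-a.e.\ $\tilde x$, that $\tilde\mu^{u}_{\tilde x}$-a.e.\ point of the local plaque $\zeta(\tilde x)$ already lies in $\tilde S_0$; I would take $\tilde S$ to be the set of $\tilde x\in\tilde S_0$ for which this holds at every point of the $\tilde f$-orbit of $\tilde x$, which is again a full-measure $\tilde f$-invariant set.

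For the stable statement, let $\tilde x\in\tilde S$ and $\tilde\omega\in\widetilde{W^{s}}(\tilde x)$. Then $d(\omega_m,x_m)\to0$ as $m\to+\infty$, so each coordinate term of the series defining $\tilde d(\tilde f^{n}\tilde\omega,\tilde f^{n}\tilde x)$ tends to $0$ while being bounded by $\diam M$; dominated convergence gives $\tilde d(\tilde f^{n}\tilde\omega,\tilde f^{n}\tilde x)\to0$, hence by uniform continuity of $\tilde\phi$ the sequence $\tilde\phi(\tilde f^{n}\tilde\omega)-\tilde\phi(\tilde f^{n}\tilde x)$ tends to $0$, and so does its Ces\`aro average. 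Since $\frac1n\sum_{j=0}^{n-1}\tilde\phi(\tilde f^{j}\tilde x)\to\tilde\phi^{+}(\tilde x)$, the average for $\tilde\omega$ converges to the same value, i.e.\ $\tilde\phi^{+}(\tilde\omega)=\tilde\phi^{+}(\tilde x)$. For the unstable statement, I first note that for $\tilde x\in\tilde S$ the set of typical points has full $\tilde\mu^{u}_{\tilde x}$-measure in all of $\widetilde{W^{u}}(\tilde x)$: on the local plaque this is the construction of $\tilde S$, and it extends to $\widetilde{W^{u}}(\tilde x)=\bigcup_{n\ge0}\tilde f^{n}(\widetilde{W^{u}_{loc}}(\tilde f^{-n}\tilde x))$ because each $\tilde f^{-n}\tilde x$ lies in $\tilde S$ and $\tilde f$ carries both the unstable-Lebesgue class and the set of typical points into themselves. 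For such a typical $\tilde\zeta\in\widetilde{W^{u}}(\tilde x)$ the backward orbits converge, $d(\zeta_{-m},x_{-m})\to0$, so the same dominated-convergence argument gives $\tilde d(\tilde f^{-n}\tilde\zeta,\tilde f^{-n}\tilde x)\to0$ and hence $\tilde\phi^{-}(\tilde\zeta)=\tilde\phi^{-}(\tilde x)$; since both $\tilde\zeta$ and $\tilde x$ are typical, $\tilde\phi^{+}(\tilde\zeta)=\tilde\phi^{-}(\tilde\zeta)=\tilde\phi^{-}(\tilde x)=\tilde\phi^{+}(\tilde x)$, as desired.

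The routine parts are the construction of the typical set and the stable estimate. The step I expect to be the main obstacle is the measure transfer in the unstable direction, namely showing that the $\tilde\mu$-negligible set of non-typical points is negligible for $\tilde\mu^{u}_{\tilde x}$ on $\tilde\mu$-a.e.\ unstable plaque and then on the whole global unstable set. This is precisely where the SRB hypothesis is indispensable, through the equivalence $\pi_*\tilde\mu^{\zeta}_{\tilde x}\approx m^{u}_{\tilde x}$ (equivalently, strict positivity of the Ledrappier--Young densities), together with the $\tilde f$-invariance of the set of typical points, which is what allows passing from the local plaques furnished by a subordinate partition to the entire global unstable sets $\widetilde{W^{u}}(\tilde x)$.
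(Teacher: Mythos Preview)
Your argument is correct and follows the same architecture as the paper's proof: build a full-measure invariant set of typical points, check that conditional unstable measures give full mass to typical points on almost every plaque, then propagate forward averages along $\widetilde{W^s}$ by continuity and along $\widetilde{W^u}$ by passing through backward averages and using typicality to close the loop. Your version is in fact more explicit than the paper's (the countable dense family, the dominated-convergence verification that $\tilde d(\tilde f^{\pm n}\cdot,\tilde f^{\pm n}\cdot)\to 0$).

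One correction, however: your attribution of the plaque step to the SRB property is misplaced. The statement ``for $\tilde\mu$-a.e.\ $\tilde x$, $\tilde\mu^u_{\tilde x}$-a.e.\ point of the plaque lies in $\tilde S_0$'' follows from disintegration alone: since $\tilde\mu(\tilde S_0^{\,c})=0$ one has $\int \tilde\mu^u_{\tilde x}(\tilde S_0^{\,c})\,d\tilde\mu(\tilde x)=0$, hence $\tilde\mu^u_{\tilde x}(\tilde S_0)=1$ for $\tilde\mu$-a.e.\ $\tilde x$. No equivalence $\pi_*\tilde\mu^{\zeta}_{\tilde x}\approx m^u_{\tilde x}$ is needed, and the paper's proof of this lemma does not invoke SRB either; the SRB hypothesis is reserved for the holonomy/absolute-continuity step in Theorem~\ref{4-13}. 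So your final paragraph overstates the role of SRB \emph{for this lemma}: here the only ingredients are Birkhoff (forward and backward), continuity of $\tilde\phi$, and the general theory of conditional measures.
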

\begin{proof}
We claim that for almost all typical points $\x$, we have $\tilde{\mu}^{u}_{\tilde x}-$a.e $\tilde{\zeta}\in\widetilde{W^{u}_{loc}}(\tilde x)$ is typical. The proof of this claim is mutatis mutandis to the lemma 3.1 in \cite{13}. Indeed, if there exists a subset of $\tilde{\mu}-$positive measure which does not satisfy the above claim, then using the definition of conditional measures we get a contradiction to the fact that typical points has full $\tilde{\mu}-$measure.

We take $\tilde{S}$ as the full $\tilde{\mu}-$measure subset of points $\x$ obtained above.
	
	Observe that by definition of typical points: \begin{align}\label{4-11-1}
	\bar{\phi}^{+}(\tilde{x})=\bar{\phi}^{-}(\tilde{x})\,\,\,\,\, and \,\,\,\,\, \bar{\phi}^{+}(\tilde{\zeta})=\bar{\phi}^{-}(\tilde{\zeta}),
	\end{align}
	and from continuity of $\tilde{\phi}$:
	\begin{align}\label{4-11-2}
	\tilde{\phi}^{-}(\tilde{\zeta})=\tilde{\phi}^{-}(\tilde{x}) \, \text{for all} \,\,\,\,\, \tilde{\zeta}\in\widetilde{W^{u}}(\tilde x).
	\end{align}
	
	Using \label{4-11-2}, \ref{4-11-1} we conclude that $\tilde{\phi}^{+}(\tilde x)=\tilde{\phi}^{+}(\tilde{\zeta})$.
	
	Again by continuity of $\tilde{\phi}$ and using the definition of stable sets,  we conclude that
	$\tilde{\phi}^{+}(\tilde x) = \tilde{\phi}^{+}(\tilde{\omega})$ for every $\tilde{\omega} \in \widetilde{W^{s}}(\tilde x).$
	
	By the definition of ergodic sum it is clear that $\tilde{S}$ is an invariant set.

\end{proof}
\begin{lemma}\label{4-12}
	Given $\tilde{\phi}\in \mathcal{L}^{1}(M^{f})$, there exists an invariant set $\tilde{S}_{\tilde{\phi}}\subset M^{f}$ with $\tilde{\mu}(\tilde{S}_{\tilde{\phi}})=1$ such that if $\tilde{x}\in \tilde{S}_{\tilde{\phi}}$ then $\tilde{\mu}^{u}_{\tilde{x}}-$a.e $\tilde{y}\in\widetilde{W}^{u}(\tilde{x})$ satisfies $$\tilde{\phi}^{+}(\tilde{y})=\tilde{\phi}^{+}(\tilde{x}).$$
\end{lemma}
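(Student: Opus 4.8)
The plan is to bootstrap from Lemma \ref{4-11}, which gives the desired invariance of the forward Birkhoff average along unstable sets for \emph{continuous} test functions, to an arbitrary $\tilde\phi \in \mathcal L^1(M^f)$ by a standard density argument, being careful that the exceptional null sets can be taken uniform once we restrict to a countable dense family. First I would fix a countable set $\{\tilde\psi_m\}_{m\in\NN}\subset C(M^f)$ that is dense in $C(M^f)$ in the uniform norm (hence dense in $L^1(\tilde\mu)$); by Lemma \ref{4-11} there is a single invariant full-measure set $\tilde S$ on which, for every $m$ and every $\x\in\tilde S$, one has $\tilde\psi_m^+(\x)=\tilde\psi_m^+(\tilde\zeta)$ for $\tilde\mu^u_{\x}$-a.e.\ $\tilde\zeta\in\widetilde{W^u}(\x)$ (the countable union of the null sets coming from each $\tilde\psi_m$, fibered over $\x$, is still null, by the same conditional-measure argument used in Lemma \ref{4-11}).

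Next I would pass from $\tilde\psi_m$ to $\tilde\phi$. Write $\tilde\phi=\tilde\psi_m+(\tilde\phi-\tilde\psi_m)$ with $\|\tilde\phi-\tilde\psi_m\|_{L^1(\tilde\mu)}$ small. By the Birkhoff ergodic theorem the forward average $(\tilde\phi-\tilde\psi_m)^+$ exists $\tilde\mu$-a.e.\ and satisfies $\int |(\tilde\phi-\tilde\psi_m)^+|\,d\tilde\mu \le \|\tilde\phi-\tilde\psi_m\|_{L^1(\tilde\mu)}$. Integrating over $\x$ and using the disintegration $\tilde\mu=\int \tilde\mu^u_{\x}\,d\tilde\mu(\x)$ (up to the partition subordinate to $W^u$), one gets that for $\tilde\mu$-a.e.\ $\x$ the $L^1(\tilde\mu^u_{\x})$-norm of $(\tilde\phi-\tilde\psi_m)^+$ restricted to the fiber is controlled; choosing $m=m(j)$ with $\|\tilde\phi-\tilde\psi_{m(j)}\|_{L^1}\le 2^{-j}$ and using Borel–Cantelli on the fibers, one obtains a full-measure invariant set $\tilde S_{\tilde\phi}\subset\tilde S$ such that for $\x\in\tilde S_{\tilde\phi}$ the functions $\tilde\psi_{m(j)}^+\to\tilde\phi^+$ both at $\x$ and for $\tilde\mu^u_{\x}$-a.e.\ $\tilde y$ in the fiber. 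Combining with $\tilde\psi_{m(j)}^+(\x)=\tilde\psi_{m(j)}^+(\tilde y)$ from $\tilde S$ and letting $j\to\infty$ yields $\tilde\phi^+(\tilde y)=\tilde\phi^+(\x)$ for $\tilde\mu^u_{\x}$-a.e.\ $\tilde y\in\widetilde{W}^u(\x)$. Invariance of $\tilde S_{\tilde\phi}$ is immediate from the cocycle identity for Birkhoff sums, exactly as at the end of Lemma \ref{4-11}.

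The main obstacle I expect is bookkeeping the exceptional sets correctly: the statement asserts the conclusion for $\tilde\mu^u_{\x}$-a.e.\ $\tilde y$ \emph{in the global unstable set} $\widetilde W^u(\x)$, not merely the local one, so one must propagate the local statement by the invariance $f(W^u(\x))=W^u(\tilde f(\x))$ and the invariance of $\tilde S$ under $\tilde f^{\pm 1}$, absorbing countably many iterates. The only genuinely delicate point is that the $L^1 \to$ fiberwise-$L^1$ control requires the conditional measures $\tilde\mu^u_{\x}$ to be a genuine disintegration of $\tilde\mu$ along a measurable partition subordinate to $W^u$; this is exactly what Definition \ref{def: unstable partition} and the existence result quoted after it provide, so the Fubini-type step $\int \big(\int |g|\,d\tilde\mu^u_{\x}\big)\,d\tilde\mu(\x)=\int|g|\,d\tilde\mu$ is legitimate. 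Everything else is the routine "continuous functions are dense in $L^1$, then chase null sets" argument.
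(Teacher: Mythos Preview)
Your proposal is correct and follows essentially the same route as the paper: approximate $\tilde\phi$ in $L^1$ by continuous functions, invoke Lemma~\ref{4-11} for those, and then pass to an a.e.\ convergent subsequence of the Birkhoff averages to transfer the conclusion to $\tilde\phi^+$. The paper's argument is terser---it simply extracts a subsequence $\tilde\phi_{n_k}^+\to\tilde\phi^+$ on a full-measure set $\tilde J$ and sets $\tilde S_{\tilde\phi}=\tilde J\cap\tilde S$, leaving implicit the Fubini/disintegration step that you spell out with the fiberwise Borel--Cantelli; your extra care there is warranted but not a different idea.
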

\begin{proof}
	Given $\tilde{\phi}\in\mathcal{L}^{1}(M^{f})$, as
	$C(M^{f})$ is dense in $\mathcal{L}^{1}(M^{f})$ therefore we can take a sequence of continuous functions $\tilde{\phi}_{n}$ converging to $\tilde{\phi}$ in $L^1-$topology. 
 By Birkhoff Ergodic Theorem for natural extensions, $\tilde{\mu}-$a.e $\tilde{x}$, $\tilde{\phi}_{n}^{+}(\tilde{x})$ exists and $\tilde{\phi}_{n}^{+}$ converges to $\tilde{\phi}^{+}$ in $L^1-$topology.

As $M^{f}$ is a compact metric space, there exists a sub-sequence $\tilde{\phi}^{+}_{n_{k}}$ and a full $\tilde{\mu}$ measure subset $\tilde{J}$ such that for every $\tilde{x} \in \tilde{J}$ we have $\tilde{\phi}^{+}_{n_{k}} (\x)$ converge to $\tilde{\phi}^{+}(\tilde{x})$.

Now take $\tilde{S}_{\tilde{\phi}}:= \tilde{J} \cap \tilde{S}$ and the proof is complete.

\end{proof}

\begin{theorem}\label{4-13}
	Let $f:M\rightarrow M$ be a $C^2-$endomorphism over a compact manifold $M$ equipped with a hyperbolic measure $\mu$ with SRB property. If $\tilde{\mu}(\tilde{\Lambda}(\bar p))>0$ then
	$$\tilde{\Lambda}^{u}(\bar p)\subset^{\circ}\tilde{\Lambda}^{s}(\bar p).$$
\end{theorem}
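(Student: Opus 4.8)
The notation $\tilde{\Lambda}^{u}(\bar p)\subset^{\circ}\tilde{\Lambda}^{s}(\bar p)$ should mean that $\tilde{\Lambda}^{u}(\bar p)$ is contained in $\tilde{\Lambda}^{s}(\bar p)$ modulo a $\tilde\mu$-null set. So the plan is to take a $\tilde\mu$-typical point $\tilde x\in\tilde{\Lambda}^{u}(\bar p)$ and show $\tilde x\in\tilde{\Lambda}^{s}(\bar p)$ up to discarding a null set.

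Let me think about what's available. We know $\tilde\mu(\tilde\Lambda(\bar p))>0$, and $\tilde\Lambda(\bar p)=\tilde\Lambda^s(\bar p)\cap\tilde\Lambda^u(\bar p)$ is $\tilde f$-invariant by Lemma \ref{4-6}. Since $\mu$ is hyperbolic with SRB property, and — I'll need ergodicity here — by Proposition \ref{1} the lift $\tilde\mu$ is ergodic, so $\tilde f$-invariance of $\tilde\Lambda(\bar p)$ together with positive measure forces $\tilde\mu(\tilde\Lambda(\bar p))=1$. In particular $\tilde\mu(\tilde\Lambda^u(\bar p))=1$ and $\tilde\mu(\tilde\Lambda^s(\bar p))=1$, so the inclusion mod $0$ is automatic... but that feels too cheap, so presumably $\tilde\mu$ is not assumed ergodic in this theorem, only hyperbolic. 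Let me reconsider: the theorem does not say ergodic, so the real content is to propagate positivity of $\tilde\mu$ on $\tilde\Lambda$ through the $W^u$-conditionals.

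Here is the approach I'd actually take. Fix a point $\tilde x_0$ in the support of $\tilde\mu|_{\tilde\Delta_l}$ that is recurrent; by the closing lemma argument (Lemmas \ref{4-5}, \ref{4-6}) this is the $\bar p$ in question, and $\tilde\mu(\tilde\Lambda(\bar p))>0$. First, I observe that $\tilde\Lambda^u(\bar p)$ is \emph{saturated} by global unstable sets: if $\tilde x\in\tilde\Lambda^u(\bar p)$ and $\tilde y\in\widetilde{W^u}(\tilde x)$ then $f^n(W^u(\y))$ meets $W^s_{loc}(\mathcal O(\bar p))$ for the same $n$ (since the global unstable sets of points on the same unstable leaf eventually coincide after forward iteration), so $\tilde y\in\tilde\Lambda^u(\bar p)$; likewise $\tilde\Lambda^u(\bar p)$ is $\pi^{-1}\pi$-saturated as noted in the text. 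The key step is then: using the SRB property, for $\tilde\mu$-a.e.\ $\tilde x$ the conditional $\tilde\mu^u_{\tilde x}$ is equivalent to Lebesgue on $W^u_{loc}(\tilde x)$; since $\tilde\mu(\tilde\Lambda^u(\bar p)\cap\tilde\Delta_l)>0$ for some $l$ (passing to $\tilde\Lambda(\bar p)$), there is a positive-Lebesgue-measure set of unstable leaves and points along them lying in $\tilde\Lambda^u(\bar p)$. Now take such an unstable leaf $W^u_{loc}(\tilde x)$. Along it, run the $\lambda$-Lemma \ref{4-7} for $\bar p$: forward iterates of this unstable disk accumulate $C^1$-on a compact disk $D\subset W^u(\bar p)$, hence on its unstable manifold, which — by the homoclinic relation defining membership in $\tilde\Lambda^u(\bar p)$ — meets $W^s_{loc}(\mathcal O(\bar p))$ transversally. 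Pushing this intersection structure back and combining with absolute continuity of the stable holonomy / the SRB conditionals, one gets that $\tilde\mu$-a.e.\ point whose unstable leaf is in $\tilde\Lambda^u(\bar p)$ also has its stable set meeting $W^u(\mathcal O(\bar p))$ transversally, i.e.\ lies in $\tilde\Lambda^s(\bar p)$.

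The main obstacle — and where I'd spend the most care — is the measure-theoretic propagation: going from ``$\tilde\mu(\tilde\Lambda^u(\bar p))>0$ and the defining transversality holds on a Lebesgue-positive set of unstable leaves'' to ``\emph{every} ($\tilde\mu$-a.e.) point of $\tilde\Lambda^u(\bar p)$ lies in $\tilde\Lambda^s(\bar p)$.'' This requires: (i) a Fubini/disintegration argument using the SRB property (Definition \ref{def: SRB property}) to see unstable conditionals as genuinely Lebesgue-equivalent, (ii) the $\lambda$-Lemma to turn a single transversal homoclinic intersection of $W^u(\bar p)$ into a mechanism that, via forward iteration, captures whole admissible unstable disks near $W^u_{loc}(\bar p)$, and (iii) an absolute continuity statement for the stable lamination in the non-invertible Pesin setting to move the positive-measure set along stable sets and conclude $\tilde\Lambda^s(\bar p)$ has full measure on $\tilde\Lambda^u(\bar p)$. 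The non-invertibility enters because $W^u$ depends on $\tilde x\in M^f$, not on $x$, so all leaf-wise statements must be made upstairs in $M^f$ and only projected at the end; this is exactly the subtlety flagged throughout Section \ref{stable-unstable}, and keeping the bookkeeping of which objects live on $M$ versus $M^f$ straight is the part most likely to hide an error.
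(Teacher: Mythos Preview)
Your outline correctly names the three tools in play (SRB disintegration, the $\lambda$-lemma, absolute continuity of the stable holonomy) and you are right that $\mu$ is not assumed ergodic here. But the chain you propose has a real gap at the step ``pushing this intersection structure back.'' The hypothesis $\tilde\mu(\tilde\Lambda(\bar p))>0$ is not used merely to find leaves in $\tilde\Lambda^u(\bar p)$; its job is to produce an \emph{auxiliary} point $\tilde y\in\tilde\Lambda^s(\bar p)\cap\tilde\Delta_l$ in the support of $\tilde\mu|_{\tilde\Delta_l\cap\tilde\Lambda^s(\bar p)}$ and recurrent to the block. Iterating $\tilde y$ forward along its stable intersection with $W^u(\bar p)$ gives $\tilde\alpha=\tilde f^n(\tilde y)\in\tilde\Delta_l$ sitting near $W^u(\bar p)$, and a small ball around $\tilde\alpha$ carries, on some leaf $W^u_{loc}(\tilde z)$, a positive-$\tilde\mu^u_{\tilde z}$-measure set $\tilde Z\subset\tilde\Lambda^s(\bar p)\cap\tilde\Delta_l$. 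It is the stable manifolds of points of $\tilde Z$ --- which have uniform size because $\tilde Z\subset\tilde\Delta_l$ --- that furnish the holonomy onto $W^u(\tilde f^m(\tilde x))$ once the $\lambda$-lemma has brought the latter $C^1$-close to $W^u(\bar p)$. Your version tries to read off stable intersections directly from the $C^1$-closeness of $f^m(W^u_{loc}(\tilde x))$ to $W^u(\bar p)$, but Lebesgue-typical points on that iterated disk need not lie in any fixed Pesin block, so you have no control on the size of their local stable manifolds and hence no holonomy to invoke. (The remark that $W^u(\bar p)$ ``meets $W^s_{loc}(\mathcal O(\bar p))$ transversally'' is vacuous: that intersection is just $p$.)

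There is a second missing step. Even after the holonomy gives a positive-$\tilde\mu^u_{\tilde f^m(\tilde x)}$-measure subset $\pi^{-1}(Z^*)\subset\widetilde W^u(\tilde f^m(\tilde x))$ lying in $\tilde\Lambda^s(\bar p)$, you still must conclude that $\tilde x$ itself lies in $\tilde\Lambda^s(\bar p)$. This is a Hopf-type argument and is exactly the content of Lemmas~\ref{4-13-1}--\ref{4-13-2}: apply Lemma~\ref{4-12} to $\tilde\phi=1_{\tilde\Lambda^s(\bar p)}$, so that on the full-measure set $\tilde S_{1_{\tilde\Lambda^s}}$ the forward Birkhoff average of $1_{\tilde\Lambda^s(\bar p)}$ is $\tilde\mu^u$-a.e.\ constant along each $\widetilde W^u$-leaf; since $\tilde\Lambda^s(\bar p)$ is $\tilde f$-invariant (Lemma~\ref{4-6}), this average equals $1_{\tilde\Lambda^s(\bar p)}$ itself, and positive conditional measure on the leaf forces the value $1$ at $\tilde f^m(\tilde x)$, hence at $\tilde x$. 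Without this mechanism you cannot pass from a positive-measure subset of a leaf to its base point, and the inclusion $\tilde\Lambda^u(\bar p)\subset^{\circ}\tilde\Lambda^s(\bar p)$ does not follow.
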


\begin{proof}
	First remember that by definition $\tilde{\Lambda}(\bar p)=\tilde{\Lambda}^{u}(\bar p)\cap \tilde{\Lambda}^{s}(\bar p).$ 
		By lemma \ref{4-12} it is enough to prove that $ \tilde{\Lambda}^{u}(\bar p)\cap\tilde{S}_{1_{\tilde{\Lambda}^{s}}} \subset \tilde{\Lambda}^{s}(\bar p)$.\\ 
	The following two lemmas are useful in the proof.	
		\begin{lemma}\label{4-13-1}
		If exists a $\tilde{\mu}^{u}_{\tilde{x}}-$positive measure subset of
		$\widetilde{W}^{u}(\tilde{x})$ belonging to $\tilde{\Lambda}^{s}(\bar p)$, then $\tilde{x}\in \tilde{\Lambda}^{s}(\bar p)$.
	\end{lemma}
	\begin{proof}
		The $\tilde{\Lambda}^{s}(\bar p)$ is $\tilde{f}-$invariant and $1_{\tilde{\Lambda}^{s}(\bar p)}=\tilde{f}(1_{\tilde{\Lambda}^{s}(\bar p)})$. This implies that
		if\,$\tilde{x}\notin \tilde{\Lambda}^{s}(\bar p)$ then $\tilde{\mu}^{u}_{\tilde{x}}-a.e. \tilde{y}\in \widetilde{W}^{u}_{loc}(\tilde{x})$ does not belong to $\tilde{\Lambda}^{s}(\bar p)$.
	\end{proof}
	\begin{lemma}\label{4-13-2}
		If exists some $m\in \mathbb{N}$ such that a $\tilde{\mu}^{u}_{\tilde{f}^{m}(\tilde{x})}-$positive measure subset, of
		$\widetilde{W}^{u}(\tilde{f}^{m}(\tilde{x}))$ belong to $\tilde{\Lambda}^{s}(\bar p)$, then $\tilde{x}\in \tilde{\Lambda}^{s}(\bar p)$.
	\end{lemma}
	\begin{proof}
		This comes from the fact that $\tilde{x}\in\tilde{S}_{1_{\tilde{\Lambda}^{s}}}$ and $\tilde{S}_{1_{\tilde{\Lambda}^{s}}}$ is an $\tilde{f}-$invariant set. The rest will be a corollary of last lemma.
	\end{proof}

 Take $\tilde{y}\in\tilde{\Lambda}^{s}(\bar p)$ an auxiliary point in a way that for some $ l>0$ both $\tilde{x},\tilde{y}$ lies in the same Pesin block $\tilde{\Delta}_{l}$ and $\tilde{y}\in supp(\tilde{\mu}|_{\tilde{\Delta}_{l}\cap\tilde{\Lambda}^{s}(\bar p)})$. We additionally suppose that $\tilde{y}$ returns back to $\tilde{\Delta}_{l}\cap\tilde{\Lambda}^{s}(\bar p)$ infinitely many times.
 
	As $\tilde{y}\in\tilde{\Lambda}^{s}(\bar p)$ then by definition there exists $n \geq 0$ such that $W_{loc}^{s}(\tilde{f}^n(\y))\pitchfork W^{u}(\bar p)\neq\emptyset$. Without loss of generality we suppose that $n=0.$ As $ W^{s}_{loc}(\y)\pitchfork W^{u}(\bar p)\neq\emptyset$  for large enough $n$ we have that $f^{n}(y)$ is very close to $W^{u}(\bar p).$
		Using Poincar\'e recurrence theorem, we could choose $n$ in such a way that $\tilde{f}^{n}(\tilde{y})\in \tilde{\Delta}_{l}$ and put $\tilde{\alpha}:= \tilde{f}^{n}(\y).$
	
	By definition $\x \in \tilde{\Lambda}^{u}(\bar p)$. So again without loss of generality we suppose $W^{u}(\tilde x)\pitchfork W^{s}_{loc}(\bar p)\neq\emptyset$ and $\tilde{\alpha}$ is such that $W^{s}_{\delta}(\tilde{\alpha})\pitchfork W^{u}(\bar p)\neq\emptyset$. Using $\lambda-$lemma we find some large $m$ that $\tilde{f}^{m}(\x)\in\tilde{\Delta}_{l}$ and
	\[W^{u}(\tilde{f}^{m}(\tilde{x}))\pitchfork W^{s}_{\delta}(\tilde{\alpha})\neq \emptyset.\,\,\,\,\,\,\,\,(*)\]
	By hypothesis $\mu$ is hyperbolic with SRB property and $\pi_{*}(\tilde{\mu}^{u}_{\tilde{x}})\approx m^{u}_{\tilde{x}}$. Let call $\pi_{*}(\tilde{\mu}^{u}_{\tilde{x}})=\mu^{u}_{\tilde{x}}$. We are going to find a positive $\tilde{\mu}^{u}_{\tilde{x}}-$subset of $\widetilde{W}^{u}(\tilde{x})$ belonging to $\tilde{\Lambda}^{s}(\bar p)$. Using lemma \ref{4-13-2} it is enough to find it on local unstable manifold of some iterate of $\x$.
	\begin{figure}[hbtp]
		\centering
		\includegraphics[scale=0.95]{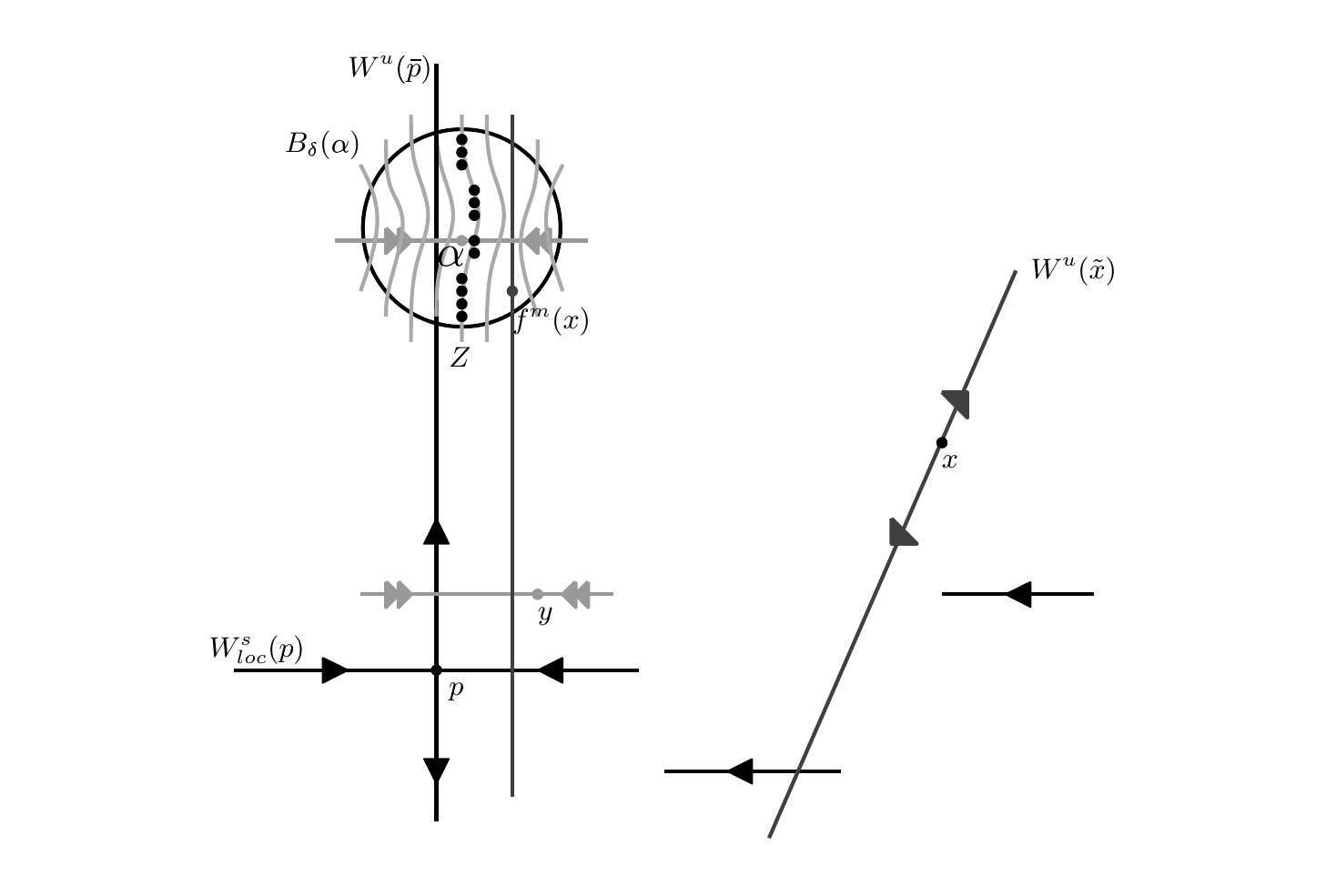}
		\caption{Ergodic Criterion}
		\label{fig:ergodic criteria}
	\end{figure}
	
	For this purpose, 
	consider a very small ball $\tilde{B}_{\delta}(\tilde{\alpha})$ around $\tilde{\alpha}$. Covering $\tilde{B}_{\delta}(\tilde{\alpha})$ with a measurable partition sub-ordinate to unstable manifolds, there exists some point $\tilde{z}\in \tilde{\Delta}_{l}$ such that $$\tilde{\mu}^{u}_{\tilde{z}}(\tilde{S}|_{\tilde{\Lambda}^{s}(\bar p)}\cap\tilde{\Delta}_{l}\cap \tilde{B}_{\delta}(\tilde{\alpha}))>0.$$
	Let us call this subset of $\widetilde{W}_{loc}^{u}(\tilde{z})$ by $\tilde{Z}:= \tilde{S}|_{\tilde{\Lambda}^{s}(\bar p)}\cap\tilde{\Delta}_{l}\cap \tilde{B}_{\delta}(\tilde{\alpha}))$. The  projection $Z:= \pi(\tilde{Z})$ is inside $W^{u}(\tilde{z})\cap \pi( B(\tilde{\alpha}))$. 
	
	By definition of metric in the orbit space $M^f$, it is clear that $\pi (B(\tilde{\alpha}))$ is a ball of smaller radius (than the radius of $B(\tilde{\alpha})$) around $\alpha:= \pi(\tilde{\alpha}).$ So, $z :=\pi(\tilde{z})$ is close enough to $\alpha.$
	
	 Observe that stable lamination varies continuously in a Pesin block and as a consequence of transversality of $W^s_{\delta}(\tilde{\alpha})$ and $W^u(\bar{p})$ and $C^1-$closeness of $W^u(\tilde{f}^m(\x))$ to $W^u(\bar{p})$ one can define the stable holonomy map from $W^u(\tilde{z})$ into both $W^u(\tilde{f}^m(\x))$ and $W^u(\bar{p}).$  The domain of the holonomy map at least contains $Z.$

	By SRB property we know that $m_{\tilde{z}}^u (Z) > 0$ and using absolute continuity of stable holonomy into $W^u(\tilde{f}^m(\x))$, we have $m^{u}_{\tilde{f}^{m}(\tilde{x})}(Z^*) > 0$ where $Z^*$ is the image of $Z$ by stable holonomy. Again using SRB property (equivalence of conditional measures and Lebesgue measure) it comes out that $\mu^{u}_{\tilde{f}^{m}(\tilde{x})}(Z^{*})>0.$ Now using definition of $\tilde \mu^{u}$ we have:
	 \[\tilde\mu^{u}_{\tilde{f}^{m}(\tilde{x})}(\pi^{-1}(Z^{*}))=\mu^{u}_{\tilde{f}^{m}(\tilde{x})}(Z^{*})>0.\]
	 Finally observe that any point in $\pi^{-1}(Z^*)$ belongs to $\tilde{\Lambda}^s (\bar{p})$ as its stable manifold intersects $W^u(\bar{p})$ and this completes the proof.
	
\end{proof}

\begin{theorem}\label{4-14}
	Let $f:M\rightarrow M$ be a $C^2-$endomorphism over a compact manifold $M$ and $\mu$ any measure with SRB property. If $\tilde{\mu}(\tilde{\Lambda}(\bar p))>0$ for a hyperbolic periodic point $\bar p$, then $\tilde{\mu}|_{\tilde{\Lambda}(\bar p)}$ is an ergodic component of $\tilde{\mu}$.
\end{theorem}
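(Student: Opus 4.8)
The plan is to combine the symmetric statement obtained from Theorem \ref{4-13} by exchanging the roles of stable and unstable objects with a standard argument that an invariant set carrying positive measure, on which every ergodic average of a countable dense family of functions is constant, is an ergodic component. First I would observe that Theorem \ref{4-13}, applied to $f$ and then to $f^{-1}$ in the natural extension (equivalently, running the same proof with the roles of $\tilde\Lambda^u$ and $\tilde\Lambda^s$ interchanged, using the $\lambda$-lemma in backward time and the SRB property transported by unstable holonomy), gives both
$$\tilde\Lambda^u(\bar p)\subset^{\circ}\tilde\Lambda^s(\bar p)\qquad\text{and}\qquad \tilde\Lambda^s(\bar p)\subset^{\circ}\tilde\Lambda^u(\bar p),$$
so that $\tilde\Lambda^s(\bar p)$, $\tilde\Lambda^u(\bar p)$ and hence $\tilde\Lambda(\bar p)=\tilde\Lambda^s(\bar p)\cap\tilde\Lambda^u(\bar p)$ all coincide modulo $\tilde\mu$. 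By Lemma \ref{4-6} the set $\tilde\Lambda(\bar p)$ is $\tilde f$-invariant, and by Lemma \ref{4-5} together with the hypothesis $\tilde\mu(\tilde\Lambda(\bar p))>0$ it carries positive measure; so it makes sense to consider the normalized restriction $\tilde\mu|_{\tilde\Lambda(\bar p)}$.

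Next I would fix a countable dense family $\{\tilde\phi_i\}\subset C(M^f)$ and apply Lemma \ref{4-11}: there is a full-measure invariant set $\tilde S$ of typical points such that for $\tilde x\in\tilde S$ the forward Birkhoff averages $\tilde\phi_i^{+}$ are constant along $\widetilde{W^s}(\tilde x)$ and along $\tilde\mu^u_{\tilde x}$-a.e. point of $\widetilde{W^u}(\tilde x)$. The key point is that modulo $\tilde\mu$ any two typical points $\tilde x,\tilde y$ lying in $\tilde\Lambda(\bar p)$ have the same value of each $\tilde\phi_i^{+}$: indeed both $\tilde x$ and $\tilde y$ are connected to the orbit of $\bar p$ through a finite chain of stable/unstable intersections (this is exactly what membership in $\tilde\Lambda^s(\bar p)$ and $\tilde\Lambda^u(\bar p)$ encodes), so using Lemma \ref{4-11}, the invariance of $\tilde S$ under $\tilde f$, and the fact that the periodic orbit $\mathcal O(\bar p)$ has well-defined (hence common) ergodic averages, one propagates the equality $\tilde\phi_i^{+}(\tilde x)=\tilde\phi_i^{+}(\bar p)=\tilde\phi_i^{+}(\tilde y)$ along the chain. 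Since the $\tilde\phi_i$ are dense in $C(M^f)$ and $C(M^f)$ is dense in $L^1$, every $L^1$ Birkhoff average is $\tilde\mu|_{\tilde\Lambda(\bar p)}$-a.e. constant, which is precisely the statement that $\tilde\mu|_{\tilde\Lambda(\bar p)}$ is ergodic, i.e. an ergodic component of $\tilde\mu$.

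The main obstacle I expect is the propagation-along-the-chain step: one has to be careful that the intersection points witnessing $W^u(\bar p)\pitchfork W^s_{loc}(\tilde f^n(\tilde x))$ and $f^m(W^u(\tilde y))\pitchfork W^s_{loc}(\mathcal O(\bar p))$ can actually be taken inside the full-measure set $\tilde S$ (for the unstable direction one only gets $\tilde\mu^u$-a.e. equality, so one must intersect with a positive-$\tilde\mu^u$ subset and use a Fubini/conditional-measure argument exactly as in Lemma \ref{4-11} and Theorem \ref{4-13}), and that the finitely many iterates $\tilde f^{\pm n}$ do not move us outside $\tilde S$, which is guaranteed by its invariance. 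The rest — reducing ergodicity to constancy of averages of a countable dense family, and transferring the $C^0$ statement to $L^1$ — is routine and mirrors Lemma \ref{4-12}. I would also remark that this theorem, combined with Lemma \ref{4-5}, is what lets one attach to each ergodic hyperbolic SRB measure $\mu$ of index $k$ a periodic point $P_\mu\in\mathrm{Per}_k$ with $\tilde\mu(\tilde\Lambda(\bar P_\mu))=1$, the starting point for the proof of Theorem \ref{main}.
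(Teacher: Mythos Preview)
Your proposal has two genuine gaps.

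\emph{First}, the ``symmetric'' version of Theorem~\ref{4-13} you invoke, namely $\tilde\Lambda^s(\bar p)\subset^{\circ}\tilde\Lambda^u(\bar p)$, cannot be obtained by ``applying Theorem~\ref{4-13} to $f^{-1}$'' or by swapping the roles of stable and unstable. The SRB property is asymmetric: it gives $\pi_*\tilde\mu^u_{\tilde x}\approx m^u_{\tilde x}$ on unstable leaves, but says nothing about conditionals on stable leaves. In the proof of Theorem~\ref{4-13} the SRB equivalence is used twice (to pass from $\tilde\mu^u$ to $m^u$ and back) together with absolute continuity of the \emph{stable} holonomy. Swapping the roles would require $\mu^s\approx m^s$, which is not assumed and generally false for SRB measures of saddle type. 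Fortunately this symmetric inclusion is not needed anywhere in the argument, so you should simply drop this step.

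\emph{Second}, and more seriously, the chain argument $\tilde\phi_i^{+}(\tilde x)=\tilde\phi_i^{+}(\bar p)=\tilde\phi_i^{+}(\tilde y)$ does not work as written. The periodic point $\bar p$ has $\tilde\mu$-measure zero and is not in the typical set $\tilde S$; its forward Birkhoff average is just the average over the finite orbit, which has no a priori relation to $\int\tilde\phi\,d\tilde\mu$. Likewise, the single transverse intersection point $w\in W^s_{loc}(\tilde f^n(\tilde x))\cap W^u(\bar p)$ that witnesses $\tilde x\in\tilde\Lambda^s(\bar p)$ is a specific point with no reason to lie in $\tilde S$, so you cannot conclude $\tilde\phi^{+}(w)=\tilde\phi^{-}(w)$. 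You correctly flag this obstacle, and the fix you sketch (replace the single intersection point by a positive $\tilde\mu^u$-measure set and use a Fubini/conditional-measure argument as in Theorem~\ref{4-13}) is exactly right --- but once you carry that out, the intermediate role of $\bar p$ as a ``typical'' point disappears entirely.

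The paper's proof is precisely this cleaned-up version: take two typical points $\tilde x,\tilde y\in\tilde\Lambda(\bar p)\cap\tilde\Delta_l\cap\tilde S$ in the support of $\tilde\mu|_{\tilde\Lambda(\bar p)}$, and rerun the geometric argument of Theorem~\ref{4-13} verbatim, merely replacing the indicator $1_{\tilde\Lambda^s(\bar p)}$ by the given continuous $\tilde\phi$. The periodic point $\bar p$ enters only through the $\lambda$-lemma (to bring $W^u(\tilde f^m(\tilde x))$ $C^1$-close to $W^u(\bar p)$ near an iterate of $\tilde y$), never as a point where Birkhoff averages are evaluated. One then concludes $\tilde\phi^{+}(\tilde x)=\tilde\phi^{+}(\tilde y)$ directly, without passing through $\tilde\phi^{+}(\bar p)$. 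This is both simpler and avoids the two issues above.
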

\begin{proof}
	
	For simplicity once again let assume that $\bar p$ is a hyperbolic fixed point. As $\tilde{\mu}(\tilde{\Lambda}(\bar p))>0$, taking any $\tilde{f}-$invariant continuous function $\tilde{\phi}: M^{f}\rightarrow \mathbb R$, we show that $\tilde{\mu}$-a.e. points in $\tilde{\Lambda}(\bar p)\cap \tilde{J}$ ($\tilde{J}$ is the set of typical points from \ref{4-11}) are $\bar{\phi}^{+}-$constant. Which implies the ergodicity of $\tilde{\mu}|_{\tilde{\Lambda}(\bar p)}$.
	Let choose arbitrary $\tilde{x},\tilde{y}\in \tilde{\Delta}_{l}\cap\tilde{\Lambda}(\bar p)\cap \tilde{S_1}:=\tilde{\Gamma}$ for some $l>0$. Without loose of generality we may assume that such $\tilde{x},\tilde{y}$ are in the support of $\tilde{\mu}|_{\tilde{\Lambda}(\bar p)}$. Using Poincar\'{e} recurrence theorem these points come back infinitely many times to $\tilde{\Gamma}$.
	Following a similar argument to theorem \ref{4-13} we prove that $\tilde{\phi}^+(\x) = \tilde{\phi}^ +({\tilde{y}}).$ As the argument is exactly the same, just substituting $1_{\tilde{\Lambda}^s(\bar p)}$ to $\tilde{\phi}$ we do not repeat it here.
	
\begin{corollary} \label{keypoint}
Let $f: M \rightarrow M$ be a $C^2-$endomorphism and $\mu$ any ergodic SRB measure of index $0 < k < n.$ Then there exists a hyperbolic periodic point $p_{\mu}$ in  $Per_{k}$ such that $\tilde{\mu}(\tilde{\Lambda}(\bar{p}_{\mu})) =1.$ 
\end{corollary}	
\begin{proof}
Observe that by lemma \ref{4-5} we have a periodic point with ergodic homoclinic class of positive measure. The above theorem and the ergodicity of $\mu$ imply that $\tilde{\mu}(\tilde{\Lambda} (\bar{p}_{\mu}))=1.$
\end{proof}
\end{proof}

\section{Proof of the Main Theorem} 

 We have proved that ergodic homoclinic classes are in a close relationship with ergodic components of a measure. In fact for any two hyperbolic ergodic measure $\mu$ and $\nu$ with SRB property, we show that they are supported on the ergodic homoclinic class of some periodic point respectively  $p_{\mu}$ and $p_{\nu}$. Then we prove that if either $[p_{\mu}, p_{\nu}] \neq \emptyset$ or $[p_{\nu}, p_{\mu}] \neq \emptyset$, then the measures are the same.

Although the base of this work is settled on the assumption of ergodic hyperbolic measures, using ergodic decomposition theorem \cite{23}, theorem \ref{5-2} and proposition \ref{5-3} we can reduce the proof in the ergodic case. Theorem II.1.1 of \cite{10} gives a version of Margulis-Ruelle inequality for $C^{1}-$maps.

\begin{theorem}
	Lef $f$ be a $C^{1}-$map of a compact, smooth Riemannian manifold $M$. If $\mu$ is an $f-$invariant Borel probability measure on $M$, then
	$$h_{\mu}(f)\leq \int_{M}\,\sum_{i}\,\lambda_{i}^{+}(x)\,m_{i}(x)\,d\mu(x),$$
	where $-\infty<\lambda_{1}(x)<\cdots<\lambda_{r(x)}<\infty$ are Lyapunov exponents of $f$ at $x$ and $m_{i}(x)$ is the multiplicity of $\lambda_{i}(x)$ for each $i=1,2,..,r(x).$
\end{theorem}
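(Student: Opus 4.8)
The statement is the classical Ruelle (Margulis--Ruelle) inequality for $C^1$ maps --- it is Theorem II.1.1 of \cite{10} --- and I would reproduce its proof, the standard cell-counting argument adapted to the non-invertible setting, as follows. Since $h_\mu(f^N)=N\,h_\mu(f)$ and the Lyapunov exponents of $f^N$ at $x$ are $N$ times those of $f$, it suffices to bound $h_\mu(f^N)$ for each fixed $N$ and then let $N\to\infty$. For any finite Borel partition $\mathcal P$ of $M$ one has the elementary inequality $h_\mu(f^N,\mathcal P)\le H_\mu\big(f^{-N}\mathcal P\,\big|\,\mathcal P\big)$, obtained by iterating $H_\mu(\bigvee_{i=0}^{n-1}f^{-Ni}\mathcal P)\le H_\mu(\bigvee_{i=0}^{n-2}f^{-Ni}\mathcal P)+H_\mu(f^{-N}\mathcal P\mid\mathcal P)$, which itself uses $H_\mu(f^{-N}\mathcal A)=H_\mu(\mathcal A)$ (from $f$-invariance of $\mu$) and the fact that conditioning on a finer partition does not increase entropy. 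Since an atom $f^{-N}(P')$ meets the atom $\mathcal P(x)$ precisely when $P'$ meets $f^N(\mathcal P(x))$, this yields
$$h_\mu(f^N,\mathcal P)\ \le\ \int_M \log \#\big\{P'\in\mathcal P:\ P'\cap f^N(\mathcal P(x))\neq\emptyset\big\}\,d\mu(x),$$
where $\mathcal P(x)$ denotes the atom of $\mathcal P$ containing $x$.

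Next I would carry out the geometric count. Fix $N$ and choose increasingly fine, roughly cubical (in a fixed finite atlas) partitions $\mathcal P_j$ with mesh $\rho_j\downarrow 0$, so that $h_\mu(f^N)=\lim_j h_\mu(f^N,\mathcal P_j)$. As $f$ is $C^1$ and $M$ is compact, $Df^N$ is uniformly continuous; writing $\omega_N(\cdot)$ for its modulus of continuity, for $y\in\mathcal P_j(x)$ one has $f^N(y)\in f^N(x)+D_xf^N\big(B(0,\rho_j)\big)+B\big(0,\omega_N(\rho_j)\rho_j\big)$, a set contained in an ellipsoid with semi-axes $\rho_j\big(\sigma_i(D_xf^N)+\omega_N(\rho_j)\big)$. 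Such an ellipsoid meets at most $C(\dim M)\prod_i\max\!\big(1,\ \sigma_i(D_xf^N)+\omega_N(\rho_j)\big)$ elements of $\mathcal P_j$, with a constant $C(\dim M)$ depending on nothing but the dimension, because a long thin ellipsoid crosses only $O_{\dim M}(1)$ cells transverse to its long axes and so contracting or degenerate directions contribute only bounded factors. This count is dominated, uniformly in $j$, by the $\mu$-integrable function $\log C(\dim M)+\dim M\cdot\log\big(\Vert D_xf^N\Vert+\sup_j\omega_N(\rho_j)+1\big)$, and as $j\to\infty$ it converges pointwise to $\log C(\dim M)+\log\prod_{\sigma_i(D_xf^N)\ge 1}\sigma_i(D_xf^N)$. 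Using the identity $\prod_{\sigma_i(A)\ge1}\sigma_i(A)=\max_{0\le k\le\dim M}\Vert\wedge^kA\Vert$ together with the reverse Fatou lemma, I would obtain
$$h_\mu(f)\ \le\ \frac{\log C(\dim M)}{N}+\frac1N\int_M\log\Big(\max_{0\le k\le \dim M}\Vert\wedge^k D_xf^N\Vert\Big)\,d\mu(x).$$

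Finally I would let $N\to\infty$. For each $k$ the functions $x\mapsto\log\Vert\wedge^k D_xf^N\Vert$ form a subadditive cocycle (since $\Vert\wedge^k(AB)\Vert\le\Vert\wedge^kA\Vert\,\Vert\wedge^kB\Vert$) bounded in absolute value by $CN$, so Kingman's subadditive ergodic theorem together with the Oseledets theorem for $f$ show that $\tfrac1N\log\Vert\wedge^kD_xf^N\Vert$ converges in $L^1(\mu)$ to the sum of the $k$ largest Lyapunov exponents of $f$ at $x$ counted with multiplicity; consequently $\tfrac1N\log\big(\max_k\Vert\wedge^kD_xf^N\Vert\big)$ converges to $\sum_i\lambda_i^+(x)m_i(x)$, a quantity bounded by $\dim M\cdot\log^+\Vert D_xf\Vert$, so dominated convergence applies to the last integral. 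Passing to the limit removes the term $\tfrac{\log C(\dim M)}{N}$ and yields $h_\mu(f)\le\int_M\sum_i\lambda_i^+(x)m_i(x)\,d\mu(x)$.

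The one genuinely delicate point is the geometric count: one must keep the additive constant $\log C(\dim M)$ independent of both $N$ and the scale $\rho_j$, which is why one linearizes $f^N$ using only the (uniform, by compactness of $M$) $C^1$ modulus of continuity of $Df$ rather than a H\"older bound, and must check that strongly contracting or degenerate directions never create an unbounded factor in the cell count. The remaining ingredients --- the entropy bookkeeping in the reduction step and the application of Kingman/Oseledets in the limit --- are routine.
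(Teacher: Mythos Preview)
The paper does not prove this statement at all; it merely quotes it as Theorem~II.1.1 of \cite{10} and uses it as a black box in Proposition~\ref{5-3}. So there is nothing in the paper to compare your argument against.

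Your proposal is the standard Ruelle cell-counting proof and is essentially correct. The reduction $h_\mu(f^N,\mathcal P)\le H_\mu(f^{-N}\mathcal P\mid\mathcal P)\le\int\log\#\{P':P'\cap f^N(\mathcal P(x))\neq\emptyset\}\,d\mu$ is valid for non-invertible $f$ exactly as you say, since $f$-invariance of $\mu$ still gives $H_\mu(f^{-k}\mathcal A\mid f^{-k}\mathcal B)=H_\mu(\mathcal A\mid\mathcal B)$. The geometric count is sketched but accurate: the image $f^N(\mathcal P_j(x))$ lies in the Minkowski sum of $D_xf^N(B_{\rho_j})$ with a ball of radius $\omega_N(\rho_j)\rho_j$, which can be covered by $C(\dim M)\prod_i\max(1,\sigma_i(D_xf^N)+\omega_N(\rho_j))$ balls of radius $\rho_j$; here it is worth stressing that the singular-value axes need not align with the partition, so one really argues via a covering by $\rho_j$-balls rather than a direct intersection count. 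The passage $j\to\infty$ is by dominated convergence (your integrand is bounded uniformly in $j$ on a compact manifold), and ``reverse Fatou'' is more than you need. The final step is clean once you note that $\max_{0\le k\le\dim M}\|\wedge^k A\|\ge\|\wedge^0 A\|=1$, so the integrand $\tfrac1N\log\max_k\|\wedge^kD_xf^N\|$ is nonnegative and bounded above by $\dim M\cdot\log\|Df\|_\infty$, making dominated convergence straightforward; you do not need the absolute-value bound on individual $\log\|\wedge^kD_xf^N\|$, which can indeed be $-\infty$ for a general $C^1$ map with degenerate derivative.
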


\begin{proposition}\label{5-3}
	Almost all ergodic components of $\mu$ are hyperbolic and SRB.
\end{proposition}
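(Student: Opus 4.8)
The plan is to combine the Margulis–Ruelle inequality stated just above with the characterization of SRB measures via the Pesin entropy formula, and then pass the relevant (in)equalities to the ergodic decomposition. First I would recall the setup: $\mu$ is assumed to be an SRB measure, which by the (non-invertible) Ledrappier–Young theorem mentioned in Section~\ref{stable-unstable} is equivalent to $\mu$ satisfying the Pesin entropy formula
$$
h_\mu(f) = \int_M \sum_i \lambda_i^+(x)\, m_i(x)\, d\mu(x).
$$
Let $\{\mu_\omega\}_{\omega}$ be the ergodic components of $\mu$, so that $\mu = \int \mu_\omega\, d\omega$ and, by the affinity of entropy under ergodic decomposition, $h_\mu(f) = \int h_{\mu_\omega}(f)\, d\omega$. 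The Lyapunov exponents and their multiplicities are measurable $\tilde f$-invariant functions (property (4) of the MET section), hence constant on each ergodic component, so $\int_M \sum_i \lambda_i^+ m_i\, d\mu = \int \left(\int_M \sum_i \lambda_i^+(x) m_i(x)\, d\mu_\omega(x)\right) d\omega$.

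The key step is to apply the Margulis–Ruelle inequality to \emph{each} ergodic component: $h_{\mu_\omega}(f) \le \int_M \sum_i \lambda_i^+(x) m_i(x)\, d\mu_\omega(x)$ for every $\omega$. Integrating this over $\omega$ gives the same quantity on both sides — namely $h_\mu(f)$ by the entropy formula — so the inequality must in fact be an equality for $\mu$-almost every $\omega$. Thus almost every ergodic component $\mu_\omega$ satisfies the Pesin entropy formula, hence (again by Ledrappier–Young in the endomorphism setting) is an SRB measure. This establishes the SRB part of the statement.

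For hyperbolicity I would argue that an ergodic SRB measure with nonzero entropy automatically has a positive Lyapunov exponent (since the entropy formula forces $\int \sum_i \lambda_i^+ m_i\, d\mu_\omega > 0$ whenever $h_{\mu_\omega}(f) > 0$), but one also needs the absence of \emph{zero} exponents. Here I would invoke the standing hypothesis of the paper that $\mu$ is a \emph{hyperbolic} SRB measure of a given index $k$ (this is what Corollary~\ref{keypoint} and Theorem~\ref{main} are about): the index and the signs of the exponents are $\tilde f$-invariant measurable data, constant on each ergodic component, and if the decomposition were taken of a measure all of whose exponents are already nonzero then every component inherits nonzero exponents of the same index. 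More carefully, since the multiplicity functions $m_i$ and exponent functions $\lambda_i$ are $\tilde f$-invariant, the full-measure set on which all exponents are nonzero is a union of ergodic components, so $\mu$-a.e.\ component is hyperbolic. Combining the two parts: $\mu$-almost every ergodic component is hyperbolic and SRB.

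The main obstacle I anticipate is not the entropy-formula bookkeeping (which is routine once affinity of entropy and $\tilde f$-invariance of the exponents are in hand), but rather being careful about the logical flow in the non-invertible setting: one must make sure the Ledrappier–Young equivalence ``SRB $\iff$ Pesin entropy formula'' is available for endomorphisms (the paper asserts this holds ``adapted to the non-invertible case'') and that the ergodic decomposition of $\tilde\mu$ on $M^f$ matches, via $\pi_*$ and Proposition~\ref{1}, the ergodic decomposition of $\mu$ on $M$, so that $h_{\mu_\omega}(f) = h_{\tilde\mu_\omega}(\tilde f)$ and the SRB property (which is phrased in terms of $\tilde\mu$ on $M^f$) descends to each component. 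Once that correspondence is in place, the proof reduces to the standard ``equality in an integrated inequality forces pointwise equality a.e.'' argument.
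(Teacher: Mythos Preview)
Your proposal is correct and follows essentially the same route as the paper: for hyperbolicity the paper argues exactly as you do (nonzero exponents on a full-measure invariant set pass to almost every component), and for the SRB part the paper likewise uses the equivalence ``SRB $\iff$ Pesin entropy formula'' for endomorphisms (Theorem~VII.1.1 of \cite{10}), the affinity of entropy under ergodic decomposition, and the Margulis--Ruelle inequality for each component to force equality $\hat\mu$-almost everywhere. Your write-up is in fact more careful than the paper's about the bookkeeping (invariance of $\lambda_i, m_i$, the $\pi_*$-correspondence of ergodic decompositions), but the underlying argument is identical.
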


\begin{proof}
	The hyperbolicity is easy to see because if not it would be possible to find a positive measure set of points with zero Lyapunov exponent and this contradicts the fact that $\mu$ is hyperbolic. For SRB property, we know that by ergodic decomposition and Margulis-Ruelle inequality, there exists a probability measure $\hat \mu$ in the space of all probability measures supported on ergodic measures $\mathcal{M}(f)$, such that  $h_{\mu}=\int_{\mathcal{M}(f)}h_{\nu}\,d\hat{\mu}(\nu)\leq \int\sum_{i} \lambda^{+}_{i}(x)\,d\mu$,  counting multiplicities. By theorem VII.1.1 of  \cite{10} $\mu$ has SRB property if and only if
	\[h_{\mu}(f)=\int_{M}\sum_{i}\lambda_{i}^{+}(x)\,d\mu(x).\]
	These clearly imply that $\hat{\mu}-$almost every $\nu$ will satisfy the entropy formula and so has SRB property.
\end{proof}

\begin{theorem}\label{5-2}
$f:M\rightarrow M$ a $C^2-$endomorphism over a compact manifold $M$ equipped with a hyperblic measure $\mu$ with SRB property and $\tilde{\mu}$ its lift. Then for any ergodic component $\tilde{\nu}$ of it, there exists a hyperbolic periodic point $p$ such that $\tilde{\nu}(\tilde{\Lambda}(\bar{p}))=1$.
\end{theorem}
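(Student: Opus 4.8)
The plan is to reduce the statement about a general (non-ergodic) hyperbolic SRB measure $\mu$ to the already-established ergodic situation handled in Corollary \ref{keypoint} and Theorem \ref{4-14}. First I would invoke Proposition \ref{5-3}, which tells us that almost every ergodic component of $\tilde\mu$ is itself hyperbolic and has the SRB property. Fix such an ergodic component $\tilde\nu$; by Proposition \ref{1} it is the lift of an ergodic $f$-invariant measure $\nu$ on $M$ which is hyperbolic with SRB property. Since $0<k<n$ where $k$ is the number of negative Lyapunov exponents of $\nu$ (hyperbolicity rules out zero exponents, and the index is constant $\tilde\nu$-a.e.\ by ergodicity and the $\tilde f$-invariance of $r(\cdot),\lambda_i(\cdot),m_i(\cdot)$), the measure $\nu$ is an ergodic SRB measure of index $k$ with $0<k<n$, exactly the setting of Corollary \ref{keypoint}.

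Next I would apply the machinery of Section 5 directly to $\tilde\nu$. Choose a Pesin block $\tilde\Delta_l$ with $\tilde\nu(\tilde\Delta_l)>0$ and a recurrent point $\x$ in $\supp(\tilde\nu|_{\tilde\Delta_l})$; the Katok closing lemma \ref{2-7} produces a hyperbolic periodic point $p$ (with $\bar p\in M^f$ its canonical lift) of the same index $k$, whose local stable and unstable admissible manifolds are $C^1$-close to those of $\x$. By Lemma \ref{4-5} applied to $\tilde\nu$ we get $\tilde\nu(\tilde\Lambda(\bar p))>0$, and Lemma \ref{4-6} gives that $\tilde\Lambda(\bar p)$ is $\tilde f$-invariant. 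Then Theorem \ref{4-14}, applied with the ergodic hyperbolic SRB measure $\tilde\nu$ in place of the general $\tilde\mu$ there, shows that $\tilde\nu|_{\tilde\Lambda(\bar p)}$ is an ergodic component of $\tilde\nu$; but $\tilde\nu$ is already ergodic, so $\tilde\nu(\tilde\Lambda(\bar p))=1$, which is exactly the assertion (this is the content of Corollary \ref{keypoint}).

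The one genuine subtlety — and the step I expect to require the most care — is making sure that all the preceding results (the ergodic criterion of Theorem \ref{4-13}, Theorem \ref{4-14}, and the auxiliary Lemmas \ref{4-11}, \ref{4-12}) were in fact stated for an arbitrary ergodic hyperbolic SRB measure and not secretly using global ergodicity of the original $\tilde\mu$; inspecting their proofs, each argument is local in the sense that it only uses Birkhoff averages, conditional measures on unstable plaques, absolute continuity of stable holonomies, and the $\tilde f$-invariance of $\tilde\Lambda^s(\bar p)$ and $\tilde\Lambda^u(\bar p)$, all of which are available for $\tilde\nu$. Hence the reduction goes through verbatim. I would close by remarking that the periodic point $p$ obtained depends on the component $\tilde\nu$, which is all that is claimed. $\fbox{\,}$
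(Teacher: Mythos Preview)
Your proposal is correct and follows essentially the same route as the paper: invoke Proposition~\ref{5-3} so that the ergodic component $\tilde\nu$ is itself hyperbolic SRB, identify it via Proposition~\ref{1} with the lift of an ergodic $\nu$ on $M$, and then apply Corollary~\ref{keypoint}. The paper's own proof is just these three citations; your version simply unpacks Corollary~\ref{keypoint} into its constituent pieces (closing lemma, Lemma~\ref{4-5}, Lemma~\ref{4-6}, Theorem~\ref{4-14}), which is fine but not a different argument.
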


\begin{proof}
Suppose that $\nu$ is an ergodic component of a hyperbolic measure $\mu$ with SRB property. By proposition \ref{5-3} the ergodic components are also hyperbolic with SRB property. By proposition \ref{1} we know the existence of the unique ergodic $\tilde\nu$ such that $\pi_{*} (\tilde\nu) = \nu$. By corollary \ref{keypoint} we get the desired periodic point.

\end{proof}

Let $\mu$ and $\nu$ be ergodic hyperbolic measures with SRB properties with respective periodic points $p_{\mu}$ and $p_{\nu}.$ Assume that $[p_{\mu}, p_{\nu}] \neq \emptyset.$

Let $B(\tilde \mu)$ and $B(\tilde \nu)$ be respectively the basins of $\tilde \mu$ and $\tilde \nu:$
\[B(\tilde\mu)=\{\tilde x:\lim_{n\rightarrow \infty}\frac{1}{n}\sum^{n-1}_{0}\tilde{\phi}(\tilde{f}^{i}(\tilde x))= \int\tilde \phi d\tilde \mu\,\,\,\,\,\,\,\; \forall\,\tilde\phi\in C(M^{f})\};\]

\[B(\tilde\nu)=\{\tilde x: \lim_{n\rightarrow \infty}\frac{1}{n}\sum^{n-1}_{0}\tilde{\phi}(\tilde{f}^{i}(\tilde x))= \int\tilde \phi d\tilde \nu\,\,\,\,\,\,\,\; \forall\,\tilde\phi\in C(M^{f})\};\]

By ergodicity $\tilde{\mu}(\tilde{\Lambda}(\bar p_{\mu}))=\tilde{\nu}(\tilde{\Lambda}(\bar p_{\nu}))=1$ and by $\widetilde{BET}$ \ref{4-10}, we can define $B_{\tilde{\mu}}$ and $B_{\tilde{\nu}}$ with $\tilde{\mu}(B_{\tilde{\mu}})=\tilde{\nu}(B_{\tilde{\nu}})=1$ as follows:

\[B_{\tilde{\mu}}=\{\tilde x:lim_{n\rightarrow \pm\infty}\frac{1}{n}\sum^{n-1}_{0}\tilde{\phi}(\tilde{f}^{i}(\tilde{x}))= \int\tilde{\phi} d\mu\,\,\,\,\,\,\,\; \forall\,\tilde{\phi}\in C(M^{f})\};\]

\[B_{\tilde{\nu}}=\{\tilde x:lim_{n\rightarrow \pm\infty}\frac{1}{n}\sum^{n-1}_{0}\tilde{\phi}(\tilde{f}^{i}(\tilde{x}))= \int\tilde{\phi} d\nu\,\,\,\,\,\,\,\; \forall\,\tilde{\phi}\in C(M^{f})\};\]

It means that $\tilde{\mu}$ (resp. $\tilde{\nu}$)-a.e. point $\tilde{x}\in\tilde{\Lambda}(\bar p_{\mu})$ (resp. $\tilde{\Lambda}(\bar{p}_{\nu})$) belongs to $B_{\tilde{\mu}}$ (resp. $B_{\tilde{\nu}}$). If we show that $B_{\tilde{\mu}}\cap B_{\tilde{\nu}}\neq\emptyset$ then we are done. 

Let us take $\tilde{x}\in\tilde{\Lambda}(\bar{p}_{\mu})$ a point for which $\tilde{\mu}^{u}_{\tilde{x}}((B_{\tilde{\mu}}\cap \tilde{\Lambda}(\bar{p}_{\mu}))^ c)=0$. By SRB property of $\mu$  we will have $m^{u}_{\tilde{x}}((B_{\mu}\cap \Lambda(p_{\mu}))^ c)=0$ where $B_{\mu}:=\pi(B_{\tilde{\mu}})$.

There exists some large Pesin block $\tilde{\Delta}_{l}, l\geq 1$ and $\tilde{y}\in\tilde{\Delta}_{l} \cap \tilde{\Lambda}(\bar{p}_{\nu})$,  a density point of $\tilde{\nu}$ such that   $m^{u}_{\tilde{y}}(B_{\nu}\cap \Delta_{l})>0$. Here $B_{\nu}:=\pi(B_{\tilde{\nu}}),\Delta_{l}=\pi(\tilde{\Delta}_{l})$. By Poincar\'e recurrence theorem $\tilde{y}$ returns back infinitely many times to $\tilde{\Delta}_{l}\cap B_{\tilde\nu}$ and consequently $y$ to $\Delta_{l}\cap B_{\nu}$. 

As $\tilde{y}\in \tilde{\Lambda}(\bar{p}_{\nu})$, there exists large iterate  $\tilde{\alpha}=\tilde{f}^{n}(\tilde{y})$ such that $W^{u}(\tilde\alpha)$ becomes very close to $W^{u}(\bar{p}_{\nu})$ in a similar way that has been explained in ergodic criteria section. Figure \ref{fig:ergodic criteria}

We have $\tilde{x}\in \tilde{\Lambda}(\bar{p}_{\mu})$ and $[p_{\mu}, p_{\nu}] \neq \emptyset.$ So, using $\lambda-$lemma we may find some large iterate $\tilde{f}^{m}(\tilde{x})$ such that $W^{u}(\tilde{f}^{m}(\tilde{x}))$ becomes close enough to $W^{u}(\bar{p}_{\nu})$ in a way that for a positive ${\nu}^{u}_{\tilde{\alpha}}-$measure $z \in \Delta_{l}\cap B_{\nu}$ we have $W^{s}_{loc}(z)\pitchfork W^{u}(\tilde{f}^{m}(\tilde{x}))\neq\emptyset$.

The stable lamination on a Pesin block is absolutely continuous \cite{10} and this implies that $m^{u}_{\tilde{f}^{m}(\tilde{x})}(B_{\nu}\cap \Delta_{l})>0$. We also have $m^{u}_{\tilde{f}^{m}(\tilde{x})}((B_\mu)^ c)=0$ which  implies $B_{\mu}\cap B_{\nu}\neq\emptyset$ and finishes the proof.

\section{Kan example} \label{kan}

The Kan example is a local diffeomorphism $F$ defined on the cylinder $\mathbb{S}^1 \times [0,1]$ as a skew product:
$$
F(z, t):= (z^d, f_z(t)), 
$$ where $z \in \mathbb{S}^1$ is a complex number of norm one and $z^d$ is the expanding  covering of the circle of degree $d > 1$. For each $z \in \mathbb{S}^1$ the function $f_z: [0, 1] \rightarrow [0,1]$ is a diffeomorphism fixing the boundary of $[0, 1].$ Take two fixed points of $z^d$ called $p, q.$ We require that $f_p$ and $f_q$ have exactly two fixed ponits each, a source at $t=1$ (respectively $t=0$) and a sink at $t=0$ (respectively $t=1$). Furthermore, $|f_z^{'} (t)| < d$ and 
$$
 \int \log f_z^{'}(0) dz < 0 \quad \text{and} \quad  \int \log f_z^{'}(1) dz < 0.
$$ 
Under these conditions $F$ has two intermingles SRB measures which are normalized Lebesgue measure of each boundary circle. Under some more conditions $F$ is also transitive (see \cite{BDV}.)
We consider two such examples and glue them  to find a local diffeomorphism of $\mathbb{T}^2$ admitting two SRB measures and topologically transitive.
Take $G : \mathbb{S}^1 \times [0,1] \rightarrow \mathbb{S}^1 \times [0,1] $ as follows:
\begin{equation} \label{doublekan}
G(z, t)= \left\{
\begin{array}{ll}
 (z^d, 1 - \frac{1}{2} f_z(2t)) \qquad &  0 \leq t \leq \frac{1}{2}\\
 (z^d, \frac{1}{2}f_z(2(1-t)) )  &   \frac{1}{2} \leq t \leq 1.
\end{array} \right.
\end{equation}

Observe that the two circles $\{t=0\}, \{t=1/2\}$ are invariant and support SRB measures with intermingles basin on the $2-$torus. We can see also that both  $F$ and $G$ are transitive. However, $G^2$ lets invariant each half trous and consequently $G$ is not mixing. We are not aware of topologically mixing example of systems with intermingled basins of SRB measures. 

Using the proof of our main theorem and the fact that the Lebesgue measures on each invariant circle $\{t=0\}$ and $\{t=1/2\}$  are hyperbolic SRB  measures we can conclude that the number of SRB measures of $G$ is precisely two (without much geometric information about the volume of their basins).

Let $\mu_1$ and $\mu_2$ be respectively the normalized Lebesgue measure on the two invariant circles. It is easy to see that each of these circles is the ergodic homoclinic class of fixed points $p_{\mu_1}:= p, p_{\mu_2} := q$ corresponding to each SRB measure ($p, q$ defined above).  Suppose that there exists another hyperbolic ergodic SRB measure $\nu.$ By  corollary \ref{keypoint} there exits a hyperbolic periodic point $P_{\nu}$ such that $\mu(\Lambda(p_{\nu}))= 1.$ Observe that $p_{\nu}$ is a periodic point in the torus minus two invariant circles. Taking large iterates of the local unstable manifold of $p_{\nu}$ we get a large curve transversal to the stable manifold of $p$ and $q$. That is $[p_{\nu}, p_{\mu_1}] \neq \emptyset$ and $[p_{\nu}, p_{\mu_2}] \neq \emptyset.$ By the proof of the main result $\nu=\mu_1 = \mu_2$ which is an absurd.


\end{document}